\documentclass[11pt,a4paper]{article}
\usepackage{amsmath,amssymb,amstext,amsfonts,latexsym,amsthm,mathtools}
\usepackage{dsfont,mathrsfs,txfonts}
\usepackage{graphicx,float,hyperref,color}
\usepackage[affil-sl]{authblk}
\usepackage[english,activeacute]{babel}
\usepackage{inputenc}
\usepackage[shortlabels]{enumitem}% itemise noident ([leftmargin=*])
\usepackage{orcidlink}
\newtheorem{theorem}{Theorem}
\newtheorem{lemma}{Lemma}

\newtheorem{remark}{Remark}

\newcommand{\CC}{\mathds{C}}
\newcommand{\NN}{\mathds{N}}
\newcommand{\PP}{\mathds{P}}
\newcommand{\RR}{\mathds{R}}
\newcommand{\RRp}{\mathds{R}_+}
\newcommand{\ZZ}{\mathds{Z}}
\newcommand{\ZZp}{\mathds{Z}_+}

\newcommand{\dsty}{\displaystyle}

\newcommand{\unifn}{\;{\mathop{\rightrightarrows}_{n}}\;}
\newcommand{\med}{{\mathfrak{M}^{\prime}[\RRp]\/}}

\title{\bf Asymptotic  for orthogonal polynomials with respect to a rational modification of a measure supported on the semi-axis}

\author[1]{Carlos F\'{e}liz-S\'{a}nchez\,\orcidlink{0000-0003-1824-4583} \thanks{cfeliz79@uasd.edu.do}\thanks{The research of C. F\'{e}liz-S\'{a}nchez  was   partially supported by    Fondo Nacional de Innovaci\'{o}n  y Desarrollo Cient\'{\i}fico y Tecnol\'{o}gico (FONDOCYT),  Dominican Republic, under~grant   2020-2021-1D1-136.}}
\author[2]{H\'{e}ctor Pijeira-Cabrera\,\orcidlink{0000-0002-8465-0646}\thanks{hpijeira@math.uc3m.es}}
\author[3]{Javier Quintero-Roba\,\orcidlink{0000-0003-0536-3102}\thanks{javier.quintero@urjc.es}}

\affil[1]{Instituto de Matem\'{a}ticas, Facultad de Ciencias, Universidad Aut\'{o}noma de Santo Domingo, \linebreak Santo Domingo 10105, Dominican Republic.}

\affil[2]{Departamento de Matem\'{a}ticas, Universidad Carlos III de Madrid,  \linebreak  Legan\'{e}s 28911,  Madrid,  Spain.}
\affil[3]{Departamento de Teor\'{\i}a de la Se\~{n}al y Comunicaciones  y Sistemas Telem\'{a}ticos y Computaci\'{o}n, Universidad Rey Juan Carlos,  \linebreak Fuenlabrada, 28943,  Madrid,  Spain.}

\date{}

\begin{document}

%%%%%%%%%%%%%
\maketitle

%%%%%%%%%%%%%%%%%%%%%%%%%%%%%%%%%%%%%%%%%%%%%%%%
\begin{abstract}
Given a sequence of orthogonal polynomials $\{L_n\}_{n=0}^\infty$, orthogonal with respect to a positive Borel $\nu$ measure supported on $\mathbb{R}_+$, let $\{Q_n\}_{n=0}^\infty$ be the sequence of orthogonal polynomials with respect to the modified measure $r(x)d\nu(x)$, where $r$ is certain rational function, and {$L_n(-1) = Q_n(-1)= (-1)^n$}. This work is devoted to the proof of the relative asymptotic
$$
\frac{Q_n^{(d)}(z)}{L_n^{(d)}(z)} \unifn  \prod_{k=1}^{N_1}\left(\frac{\sqrt{a_k}+i}{\sqrt{z}+\sqrt{a_k}}\right)^{A_k}\prod_{j=1}^{N_2} \left(\frac{\sqrt{z}+\sqrt{b_j}}{\sqrt{b_j}+i}\right)^{B_j},$$
on compact subsets of $\mathbb{C}\setminus\mathbb{R}_+$,  where $a_k$ and $b_j$ are the zeros and poles of $r$, and the $A_k$, $B_j$ are their respective multiplicities.

\bigskip

\noindent\textbf{Mathematics Subject Classification:} 30C15$\;\cdot\;$42C05$\;\cdot\;$33C45$\;\cdot\;$33C47

\noindent\textbf{Keywords:} Orthogonal polynomials $\cdot$ asymptotic behaviour $\cdot$ rational modification of a measure $\cdot$ varying measures

\end{abstract}

%%%%%%%%%%%%%%%%%%%%%%%%%%%%%%%%%%%%%%%%%%%%%%%%%%%%%%%%%%%%%%%%%%%%%%%%%%%%%%%%%%%%%%%%%

\section{Introduction}

Let $\mu$ be a  positive, finite, Borel measure on   $\RRp=[0,+\infty)$, such that for all  $ n\in \ZZp$ (the set of all non-negative integers)
\begin{equation}\label{mu-Moments}
\eta_n=\int_{0}^{\infty} x^n \, d\mu(x) < \infty.
\end{equation}
there is no other measure $\mu_0$, such that  $\dsty \eta_n=\int_{0}^{\infty} x^n \, d\mu_0(x)$ for all  $ n\in \ZZp$,  it is said that the moment problem associated with $\{\eta_n\}_{n\in \ZZp}$  is determined (see (\cite{Sch17} Ch. 4)). By~a classical result of T. Carleman  (see (\cite{Sch17} Th.  4.3)), a~sufficient condition in order to the moment problem associated with the sequence $\{\eta_n\}_{n\in \ZZp}$  in \eqref{mu-Moments} to be determined is
\begin{equation}\label{Carleman-Cond}
\sum_{n=1}^{\infty} \frac{1}{\sqrt[2n]{\eta_n}}=+ \infty.
\end{equation}
We  say that the measure  $\mu$ belongs to the class $\med$ if $\{\eta_n\}_{n\in \ZZp}$  satisfies \eqref{Carleman-Cond} and  $\mu^{\prime}>0$   a.e. on $\RRp$ with respect to Lebesgue~measure.

Let $\dsty r(z)= \frac{\alpha(z)}{\beta(z)}$  be a rational function, where $\alpha$ and $\beta$ are coprime polynomials with  respective degrees  $A$ and $B$.  We say that $d\mu_{r}(x)= r(z) d \mu(z)$ is a rational modification (for brevity, modification) of the measure $\mu$. Write
$$\alpha(z)=\prod_{i=1}^{N_1}(z-a_i)^{A_i}, \qquad \beta(z)=\prod_{j=1}^{N_2}(z-b_j)^{B_j}, $$
where $ a_i,b_j\in \CC\setminus \RRp$, $A_i,B_j\in \NN$. $A=A_1+\cdots+A_{N_1}$ and $B=B_1+\cdots+B_{N_2}.$

We denote by $\{L_n\}_{n=0}^\infty$ the sequence of monic orthogonal polynomials with respect to $d\mu$. Assume that $\{Q_n\}_{n=0}^\infty$ is the sequence of monic polynomials of least degree, not identically equal to zero, such that
\begin{equation}\label{PerturbedMonicPoly}
\int_{0}^{\infty} x^k \,  Q_n(x)\;r(x) d\mu(x)=0, \quad \text{for all} \quad  k=0,1,2,\dots,n-1.
\end{equation}
The existence of $Q_n$ is an immediate consequence of \eqref{PerturbedMonicPoly}.  Indeed, it is deduced solving an homogeneous linear system with $n$ equations and $n+1$ unknowns. Uniqueness follows from the minimality of the degree of the polynomial. We call $Q_n$ the $n$th monic modified orthogonal polynomial. In~(\cite{uvarov69} Th.1),  explicit formulas are provided in order to compute $Q_n$ when the poles and zeros of the rational modification have a multiplicity of~one.%Check meaning retained

Suppose that $\{a_i\}_{i=1}^{N_1},\{b_j\}_{j=1}^{N_2} \subset \CC \setminus [-1,1] $.  If~$\mu$ is a  positive  (finite Borel) measure on   $[-1,1]$, such that $\mu$ is on the Nevai class $\mathfrak{M}(0,1)$, in~ (\cite{LopMarWal95} Th. 1) the authors prove the following asymptotic formula
\begin{equation}\label{AsymBoundedCase}
\frac{Q_n^{(d)}(z)}{L_n^{(d)}(z)} \unifn  \prod_{i=1}^{N_1} \left( \frac{\varphi(z) - \varphi(a_i)}{2 (z- a_i)}\right)^{A_i}\;  \prod_{j=1}^{N_2} \left( 1- \frac{1}{\varphi(z) \varphi(b_j)}\right)^{B_j},
\end{equation}
on $K\subset \overline{\CC} \setminus [-1,1]$. The~notation  $ f_n \unifn f, \;K \subset U$ means that  the sequence of functions $f_n$ converges to $f$ uniformly on  a compact subset $K$ of the region $U$,   $f^{(d)}$ denotes the $d$th derivative of $f$,  $d\in \ZZp$ is fixed  and
$$\varphi(z)=z+\sqrt{z^2-1}\quad \left( \left| z+\sqrt{z^2-1} \right|>1, \quad z \in \CC \setminus [-1,1] \right).$$

In~\cite{LopMarWal95}, the~asymptotic formula \eqref{AsymBoundedCase} is pivotal in examining the asymptotic properties of orthogonal polynomials across a broad range of inner products, encompassing Sobolev-type inner products  $$\langle f,g \rangle_{S} = \int f g \, d\mu
+ \sum_{j=1}^m \sum_{i=0}^{d_j} \lambda_{j,i} \; f^{(i)}(\zeta_j) \,g^{(i)}(\zeta_j),$$
where $\lambda_{j,i} \geq  0$,  $m, d_j > 0$, $\mu$ is certain kind of complex measure with compact support is defined on the real line, and~$\zeta_j$ represents complex numbers outside the support of $\mu$. The~authors compare the Sobolev-type orthogonal polynomials associated with this measure to the orthogonal polynomials with respect to $\mu$. These asymptotic results are of interest for the electrostatic interpretation of zeros of Jacobi--Sobolev polynomials (cf.~\cite{PiQuiTo23}).

On the other hand, the~use of modified   measures  provides a stable way of computing the  coefficients of the recurrence relation associated to a family of orthogonal polynomials (see (\cite{Gau04}~Ch. 2)) and  in~\cite{Lago85,Lago89} the interest of the modified orthogonal polynomials  for  the study of the multipoint Pad\'{e} approximation is~shown.

For measures supported on $[0,+\infty)$ (or $(-\infty,+\infty)$) that satisfy the Carleman condition, G. L\'{o}pez in (\cite{Lago90} Th. 4) (or (\cite{Lago90} Th. 3)  for $(-\infty,+\infty)$) proves a quite general version of the relative asymptotic formula \eqref{AsymBoundedCase}. In~this case, if~the modification function, $\rho$,  is a non-negative function on $[0,+\infty)$ in $\mathrm{L}^1(\mu)$, such that there exists an algebraic polynomial $G$ and $k \in \NN$ for which $|G|\rho/(1+x)^k$ and $|G|\rho^{-1}/(1+x)^k$  belong to $\mathrm{L}^{\infty}(\mu)$, then
\begin{equation}\label{AsymUNBoundedCase}
\frac{Q_n(z)}{L_n(z)} \unifn    \frac{\mathrm{S}(\rho,{\CC} \setminus [0,+\infty),z )}{\mathrm{S}(\rho,{\CC} \setminus [0,+\infty),\infty )}, \quad K \subset {\CC} \setminus [0,+\infty);
\end{equation}
where  $\mathrm{S}(\rho,{\CC} \setminus [0,+\infty),z )$ is the Szeg\H{o}'s function for $\rho$ with respect to ${\CC} \setminus [0,+\infty)$, i.e.,
\begin{align*} %\label{SzegoFunction}
 \mathrm{S}(\rho,{\CC} \setminus [0,+\infty),z )= &  e^{ s(z)},  \quad s(z)=\frac{1}{2 \pi}\int_{0}^{\infty} \log\rho(x) \left( \frac{\sqrt{-z}}{z-x}\right) \frac{dx}{\sqrt{x}};\\ \nonumber
  \mathrm{S}(\rho,{\CC} \setminus [0,+\infty),\infty )= & \lim_{r \to +\infty}  \mathrm{S}(\rho,{\CC} \setminus [0,+\infty),-r );
\end{align*}
where the  roots are selected from the condition $\sqrt{1}=1.$ Additionally,  it is requested that $f(z)=\rho(-((z+1)/(z-1))^2)$ satisfies the  Lipschitz condition in $z=1$ and $f(1)\neq 0$.

Asymptotic results, analogous to those obtained in~\cite{LopMarWal95}, are obtained in~\cite{DiHerPi23}  for the particular case of \eqref{AsymUNBoundedCase}, when $d\mu(x)= x^{a} e^{-x} dx$ with $a >-1$ (the Laguerre measure).

The aim of this paper is to obtain an analog of \eqref{AsymBoundedCase} for measures supported on $\RR_+$, {using a different normalization}. We prove the following~theorem.

\begin{theorem}\label{main:result:CH}
Given a measure $\nu\in \med$, {let $Q_n$, and $L_n$ be the $n$th degree orthogonal polynomials with respect to the measures $r(x)d\nu$ and $d\nu$, normalized with the condition $L_n(-1) = Q_n(-1) = (-1)^n$. Then} it holds in compact subsets of $\CC\setminus \RR$
\begin{equation}\label{final:formula}
\frac{Q_n^{(d)}(z)}{L_n^{(d)}(z)} \unifn  \prod_{i=1}^{N_1}\left(\frac{\sqrt{a_i}+i}{\sqrt{z}+\sqrt{a_i}}\right)^{A_i}\prod_{j=1}^{N_2} \left(\frac{\sqrt{z}+\sqrt{b_j}}{\sqrt{b_j}+i}\right)^{B_j},
\end{equation}
for $d\in \ZZp$.
\end{theorem}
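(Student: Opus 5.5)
The plan is to reduce to the bounded case via the map $x\mapsto$ (a Möbius/square-root transformation) that sends $\RRp$ to a compact set, or more directly, to reduce to López's theorem \eqref{AsymUNBoundedCase} applied with $\rho=|r|$-type factors. I propose the more elementary route of Uvarov/Christoffel-type factorization combined with ratio asymptotics for varying measures. First, I treat the case $d=0$ and obtain the derivatives at the end. For $d=0$, write $r=\alpha/\beta$. Consider the auxiliary measure $d\tilde\nu=|\beta(x)|^{-2}\,d\nu$ (or $d\nu/|\beta|^2$ adjusted so it is positive), which belongs to $\med$ since it only changes $\nu$ by a bounded positive factor that decays at infinity. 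Carleman's condition is preserved, and so is $\nu'>0$ a.e. By orthogonality, $\alpha(x)Q_n(x)$ is orthogonal with respect to $d\nu/\beta$ against polynomials of degree $<n$. Expanding $\alpha Q_n\overline{\beta}$ (degree $n+A+B$) in terms of the orthogonal polynomials of $d\nu/|\beta|^2$, only the $A+B+1$ top terms survive. Hence
\[
\alpha(z)\overline{\beta}(z)Q_n(z)=\sum_{k=0}^{A+B}\lambda_{n,k}\,\tilde L_{n+k}(z),
\]
and similarly $L_n$ is related to $\tilde L_n$ through a finite sum obtained by multiplying by $\beta\overline\beta$.

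The coefficients are determined by linear conditions: $\alpha Q_n\overline\beta$ vanishes, with multiplicities, at the zeros $a_i$ and at the conjugate poles $\overline{b_j}$. This gives an $(A+B)\times(A+B+1)$ system. To solve it asymptotically I need the ratio asymptotics
\[
\frac{\tilde L_{n+1}(z)}{\tilde L_n(z)}\;\longrightarrow\;\Phi(z)
\]
uniformly on compact subsets of $\CC\setminus\RRp$ after the scaling induced by the normalization at $-1$. In the normalization $\tilde L_n(-1)=(-1)^n$ I expect this limit to be $1$, and the relevant information lies instead in the ratios at different points. For measures on $\RRp$ with unbounded support, the natural tool is the change of variables $x=t^2$, which turns $\nu$ into a symmetric measure on $\RR$ and $L_n(x)$ into $S_{2n}(t)$. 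One then applies López's result \eqref{AsymUNBoundedCase}, or its Carleman-class analogue (\cite{Lago90} Th. 3), to the measure $|r|$-modified on $\RR$. Concretely, I would apply \eqref{AsymUNBoundedCase} directly with $\rho=|r|$; its hypotheses hold with $G=\alpha\beta$ and $k=\max(A,B)$, up to the sign issue of the complex zeros. Since $r$ is not positive, I must first handle the complex case. The standard trick is to write $r=|r|\cdot(\text{unimodular})$, or to reduce factor by factor: each real factor $(x-a)$ with $a<0$ is a positive Christoffel modification, and each pair $(x-a)(x-\overline a)$ is positive. For non-real $a$ without its conjugate, I would use the varying-measure approach of \cite{LopMarWal95}, adapted via $x=t^2$.

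The Szegő function of $|x-a|$ on $\CC\setminus\RRp$ should be computable explicitly as a constant multiple of $(\sqrt z+\sqrt a)/\sqrt{-z}$-type expressions. Dividing by the value at $z=-1$, which the normalization $L_n(-1)=Q_n(-1)=(-1)^n$ imposes in place of the value at $\infty$, produces exactly the factors $(\sqrt{a}+i)/(\sqrt z+\sqrt a)$, since $\sqrt{-1}=i$. Poles give the reciprocal factors, which yields \eqref{final:formula} for $d=0$. The derivatives then follow by a standard argument: $Q_n/L_n\to F$ uniformly on compacts, and $L_n'/L_n\to$ a limit in $\CC\setminus\RRp$, because the zeros of $L_n$ lie in $\RRp$ and $L_n^{(d)}/L_n$ is controlled by $\sum 1/(z-x_{n,k})$. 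Then $Q_n^{(d)}/L_n^{(d)}$ tends to $F$, since $(FL_n)^{(d)}/L_n^{(d)}\to F$ because $L_n^{(d)}/L_n$ dominates the lower derivatives as $n\to\infty$. In practice this is done via Cauchy's formula together with the fact that $L_n^{(j)}/L_n^{(d)}\to0$ for $j<d$.

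The main obstacle is the complex, non-conjugate-paired zeros and poles, where $r$ is not a positive weight and \eqref{AsymUNBoundedCase} does not apply directly. For these I would try to mimic \cite{LopMarWal95}: prove that the quotients $Q_n/L_n$ form a normal family, and identify every limit via the orthogonality conditions. This identification uses the relative asymptotics for varying measures $d\nu/|w_{n}|^2$ on $\RRp$, obtained through the $x=t^2$ symmetrization.
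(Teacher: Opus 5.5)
There is a genuine gap. The step that actually produces the limit is never carried out: identifying the asymptotic connection coefficients when the $a_i,b_j$ are non-real and not paired with their conjugates, which is the generic case of the theorem. The concrete routes you propose for it do not work.

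\textbf{The Uvarov system degenerates.} Your Uvarov identity $\alpha\bar\beta Q_n=\sum_{k=0}^{A+B}\lambda_{n,k}\tilde L_{n+k}$ is correct. However, for a fixed measure in $\med$ the ratios $\tilde L_{n+k}/\tilde L_n$ become independent of $z$ as $n\to\infty$. With your normalization they tend to $(-1)^k$, as follows from combining Lemma~\ref{Th-AsympCocient} with Theorem~\ref{asy:(x+1)}. So every row of the $(A+B)\times(A+B+1)$ system of conditions at the $a_i$ and $\bar b_j$ tends to the same vector. The limit system has rank one and determines nothing.

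\textbf{Using $\rho=|r|$ computes different polynomials.} Applying \eqref{AsymUNBoundedCase} with $\rho=|r|$ describes the polynomials orthogonal for $|r|\,d\nu$, not $Q_n$. Since $|x-a|=|x-\bar a|$ on $\RRp$, anything derived from $|r|$ is invariant under $a_i\mapsto\bar a_i$. The factor $(\sqrt{a_i}+i)/(\sqrt z+\sqrt{a_i})$ is not invariant when $a_i\notin\RR$, because $\sqrt{\bar a}=-\overline{\sqrt a}$ for the branch used here. So this is not a ``sign issue''; it is the whole content of the statement.

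\textbf{The remaining reductions are left open.} Your factor-by-factor reduction covers only real or conjugate-paired factors. For the rest you defer to mimicking the bounded-case method behind \eqref{AsymBoundedCase} via $x=t^2$. That substitution keeps the support unbounded, so it does not yield the ``relative asymptotics for varying measures on $\RRp$'' you then invoke, and no such result is formulated or proved.

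The missing idea is the paper's compactification with varying weights. The M\"obius change $x=\Psi(t)=(1+t)/(1-t)$ turns orthogonality on $\RRp$ into orthogonality on $[-1,1]$ with respect to the varying measures $d\sigma(t)/(1-t)^{2n-1}$. For these, Carleman's condition gives L\'opez's ratio asymptotics $P_{n,n+k+1}/P_{n,n+k}\to\varphi/2$ (Lemmas~\ref{Carleman-Equiv} and~\ref{Lago-1}).

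The paper expands $(1-t)^{n+A}S_n(\Psi(t))\alpha(\Psi(t))$ in the polynomials $P_{n,n+A-k}$ of this single varying measure. Back on $\RRp$, this amounts to expanding $S_n\alpha$ in $\left(\frac{x+1}{2}\right)^k\ell_{A-k,n+A-k}$, which are orthogonal with respect to the degree-dependent measures $\left(\frac{2}{x+1}\right)^{2(A-k)}d\mu$. Their successive ratios tend to the non-constant function $\left(\frac{\sqrt z+i}{2}\right)^2$ (Lemma~\ref{Th-AsympCocient}). Hence the limiting conditions at the $a_i$, and through Lemma~\ref{LemmaOfIntegrals} at the $b_j$, form a non-degenerate confluent Vandermonde system. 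Its solution is the polynomial with zeros $\Phi(a_i)/2$ and $1/(2\Phi(b_j))$.

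One still needs the comparison $\ell_{0,n}/L_n$ of Theorem~\ref{asy:(x+1)} and a separate treatment of a zero of $r$ at $-1$. Your Uvarov set-up could plausibly be rescued the same way. Transplanted to one varying measure on $[-1,1]$, the conditions at $\bar b_j$ stay point evaluations, at the cost of an extra comparison between $\nu$ and $\nu/|\beta|^2$. As written, though, the proposal has no working mechanism for the central step.

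Finally, in the derivative step, $L_n'/L_n$ has no finite limit in general; for Laguerre weights it grows like $\sqrt n$. What you need, and in fact use afterwards, is $L_n^{(j)}/L_n^{(d)}\to0$ for $j<d$.
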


This situation  is not a particular case of \eqref{AsymUNBoundedCase},  because~we consider $\rho$ as a rational function with complex coefficients and no necessarily  $\rho(x) \geq 0$   on $\RRp$.

The structure of the paper is as follows:  Sections~\ref{Ch2-Sec2}  and \ref{Sec-3}  are devoted to prove some preliminary results on varying measures. On~the other hand, in~Section~\ref{Ch2-Sec3}  we obtain an essential theorem    that allows us to finally prove Theorem \ref{main:result:CH} in Section~\ref{sec:rationalAsym}.

%%%%%%%%%%%%%%%%%%%%%%%%%%%%%%%%%%%%%%%%%%%%%%%%%%%%%%%%%%%%%%%%%%%%%%%%%%%%%%%%%%%%%%%

\section{Varying measures and Carleman's~condition} \label{Ch2-Sec2}

In this section, we introduce auxiliary results on varying measures and prove some useful lemmas that allow  us to extend results that hold for measures with bounded support to the unbounded case. The~following notations will be used throughout the paper:
\begin{equation}\label{ChangeVariables}
\begin{aligned}
\Psi(z)=&  \frac{1+z}{1-z}  \;\text{ for }  \;  z \in \CC \setminus [-1,1]. \\
 \Psi^{-1}(z)=&  \frac{z-1}{z+1} \;\text{ for } \;  z \in \CC \setminus \RRp.\\
      \Phi(z)= & \frac{\sqrt{z}+i}{\sqrt{z}-i}  \; \text{ where } \;   \Phi(-1)=\infty \; \text{ and }  \; z \in \CC \setminus [|z|\leq1].
      \end{aligned}
\end{equation}
If  $\sigma$ is  a finite positive Borel measure on $[-1,1]$, we denote
\begin{equation}\label{VaryingMeasure}
d\sigma_n(t)= \frac{d\sigma(t)}{(1-t)^{2n}} \quad \text{ and }\quad  \varsigma_{n}=\int_{-1}^{1} \frac{d\sigma(t)}{(1-t)^n}.
\end{equation}

Here, we consider the principal branch of the square root, i.e.,~$\sqrt{re^{i\theta}} = \sqrt{r}e^{i\frac{\theta}{2}}$, where $r>0$ and $0\leq \theta < 2\pi$.

\begin{lemma}\label{Carleman-Equiv} Let $\mu$ be a positive Borel measure supported on $\RRp$ and suppose that  $\dsty d\sigma(t)= (1-t) \, d\mu\left(\Psi(t)\right)$.~Then,
\begin{enumerate}
 \item[(a)]  $\mu^{\prime}>0$ a.e. on $\RRp$ implies that   $\sigma^{\prime}>0$ a.e. on $[-1,1]$,
  \item[(b)] if $\dsty\sum_{n=1}^{\infty} \frac{1}{\sqrt[2n]{{\eta}_n}}=+ \infty$, then $ \dsty \sum_{n=1}^{\infty} \frac{1}{\sqrt[2n]{\varsigma_n}}=+ \infty,$
\end{enumerate}
where, as~in \eqref{mu-Moments},   $\eta_n$ denotes the $n$th moment of the measure $d\mu$.
\end{lemma}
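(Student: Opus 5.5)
The plan is to treat $\sigma$ as the push-forward of $\mu$ under $\Psi^{-1}(x)=\frac{x-1}{x+1}$, weighted by $1-t$. Under this map $\RRp$ goes onto $[-1,1)$, with $x=\Psi(t)=\frac{1+t}{1-t}$. Concretely, for a Borel set $E\subset[-1,1)$ we read the definition as $\sigma(E)=\int_{\Psi(E)}(1-\Psi^{-1}(x))\,d\mu(x)$, and we note that $1-\Psi^{-1}(x)=\frac{2}{x+1}$. Both parts then reduce to this change of variables.

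For (a), $\Psi$ is a diffeomorphism from $(-1,1)$ onto $(0,\infty)$ with $\Psi'(t)=\frac{2}{(1-t)^2}>0$. Write the Lebesgue decomposition $d\mu=\mu'(x)\,dx+d\mu_s$. The absolutely continuous part transforms as $\mu'(\Psi(t))\Psi'(t)\,dt$, and the singular part stays singular, because $\Psi$ maps Lebesgue-null sets to Lebesgue-null sets in both directions (it is locally bi-Lipschitz). Hence
\[
\sigma'(t)=(1-t)\,\mu'(\Psi(t))\,\frac{2}{(1-t)^2}=\frac{2\,\mu'(\Psi(t))}{1-t}\quad\text{a.e. on }(-1,1).
\]
Moreover, $\{t:\sigma'(t)=0\}=\Psi^{-1}(\{x:\mu'(x)=0\})$ up to null sets. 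Since $\Psi^{-1}$ is locally Lipschitz, it preserves null sets, so $\sigma'>0$ a.e.

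For (b), compute the moments of $\sigma$ directly:
\[
\varsigma_n=\int_{-1}^1\frac{d\sigma(t)}{(1-t)^n}=\int_0^\infty (1-t)^{1-n}\,d\mu(x),\qquad 1-t=\frac{2}{x+1}.
\]
This gives $\varsigma_n=2^{1-n}\int_0^\infty (1+x)^{n-1}\,d\mu(x)$. Expand $(1+x)^{n-1}$ binomially. Since $\int x^k\,d\mu\le \eta_0+\eta_{n-1}$ for $k\le n-1$ (split at $x=1$), we get $\varsigma_n\le 2^{1-n}2^{n-1}(\eta_0+\eta_{n-1})=\eta_0+\eta_{n-1}$. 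It remains to pass from $\sum \eta_n^{-1/(2n)}=\infty$ to $\sum(\eta_0+\eta_{n-1})^{-1/(2n)}=\infty$.

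This transfer is the main obstacle, because of the index shift and the additive constant. If $\eta_n$ stays bounded (for example when $\mu$ is supported in $[0,1]$), then $\varsigma_n$ is bounded and the series diverges trivially. Otherwise $\eta_{n}\ge\eta_0$ for large $n$, so $\eta_0+\eta_{n-1}\le 2\max(\eta_{n-1},\eta_n)$. I would use log-convexity of moments (Cauchy--Schwarz: $\eta_{n-1}^2\le\eta_{n-2}\eta_n$), or the more elementary Hölder bound $\eta_{n-1}\le \eta_0^{1/n}\eta_n^{(n-1)/n}$, to reach $\eta_0+\eta_{n-1}\le C\,\eta_n$ for large $n$, with $C$ depending only on $\eta_0$ (after normalizing $\eta_0$ if needed). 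Then
\[
\varsigma_n^{-1/(2n)}\ge C^{-1/(2n)}\eta_n^{-1/(2n)}\ge c\,\eta_n^{-1/(2n)}
\]
with $c>0$, since $C^{1/(2n)}\to1$. Divergence of $\sum\eta_n^{-1/(2n)}$ therefore gives divergence of $\sum\varsigma_n^{-1/(2n)}$.
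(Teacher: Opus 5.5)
Your proposal is correct and follows essentially the same route as the paper: the same change of variables $x=\Psi(t)$, the same density formula $\sigma'(t)=2\mu'(\Psi(t))/(1-t)$ for (a), and the same moment bound for (b), obtained by splitting the integral at $x=1$. In (b) you additionally justify the final comparison step (absorbing the additive constant $\eta_0$), which the paper leaves implicit; on the other hand, the H\"older detour for the index shift is unnecessary, since $x^{n-1}\le x^{n}$ on $[1,\infty)$ gives $\varsigma_n\le\eta_0+\eta_n$ directly, as in the paper.
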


\begin{proof}  To prove the first assertion note that  if  $\dsty  d\sigma(t)=  \frac{2}{1-t} \,\mu^{\prime}\left(\Psi(t)\right) dt$, then

\begin{equation*}
\frac{d\sigma}{dt}= (1-t) \frac{d\mu\left(\Psi(t)\right)}{dt}= \frac{2}{1-t}\mu'\left(\Psi(t)\right) >0 \quad \text{ a.e. on } [-1,1].
\end{equation*}
The second part is derived using the change of variable $\dsty t= \Psi^{-1}(x)$ in the integral
\begin{align}\nonumber
\varsigma_n & = \int_{-1}^1 \frac{(1-t)}{(1-t)^n}d\mu\left(\Psi(t)\right) = \int_{0}^\infty \left(\frac{x+1}{2}\right)^{n-1} d\mu(x)
\\ \nonumber & = \int_{0}^1 \left(\frac{x+1}{2}\right)^{n-1} d\mu(x) + \int_{1}^\infty \left(\frac{x+1}{2}\right)^{n-1} d\mu(x)\\
& \leq \eta_0 + \int_{1}^\infty x^{n-1} d\mu(x) \leq \eta_0+ \eta_n. \label{CritComparacion}
\end{align}
As  $\dsty \sum_{n=0}^\infty (\eta_n)^{-1/2n}= +\infty$, it follows from~\eqref{CritComparacion} that assertion~(b) holds.
\end{proof}

\begin{lemma}\label{Lemma:factor(x+1)} Assume that   $d\nu \in \med$,   $\dsty r_k(x)= \left(\frac{x+1}{2}\right)^k$ and consider the modification $d\nu_{r_k}(x)=r_k(x) d\nu(x)$. Then  $\dsty d\nu_{r_k}(x)\in \med$ \: for all $k\in \ZZ$.
\end{lemma}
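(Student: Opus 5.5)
We must verify two things for $d\nu_{r_k}$ with $k\in\ZZ$: that all its moments are finite and satisfy Carleman's condition \eqref{Carleman-Cond}, and that $\nu_{r_k}'>0$ a.e. on $\RRp$.

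The positivity condition is immediate. On $\RRp$ the factor $r_k(x)=\left(\frac{x+1}{2}\right)^k$ is continuous and strictly positive for every $k\in\ZZ$. Hence $\nu_{r_k}'(x)=r_k(x)\,\nu'(x)>0$ at every point where $\nu'(x)>0$, that is, a.e. on $\RRp$.

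For the moments, write $\eta_n$ for the moments of $\nu$ and $\eta_n^{(k)}$ for those of $\nu_{r_k}$. I treat two cases.

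\textbf{Case $k\le 0$.} Here $0<r_k(x)\le 1$ on $\RRp$, so $\eta_n^{(k)}\le\eta_n$ for every $n$. In particular all moments are finite, and $(\eta_n^{(k)})^{-1/2n}\ge \eta_n^{-1/2n}$. Divergence of the Carleman series for $\nu$ therefore forces divergence for $\nu_{r_k}$.

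\textbf{Case $k>0$.} Splitting the integral at $x=1$, as in \eqref{CritComparacion}, gives
\begin{equation*}
\eta_n^{(k)}\le \eta_0+\int_1^\infty x^{n+k}\,d\nu(x)\le \eta_0+\eta_{n+k},
\end{equation*}
because $\frac{x+1}{2}\le x$ for $x\ge1$. Since $\nu$ is not a finite combination of point masses, $\eta_{n+k}\to\infty$, so for large $n$ we get $\eta_n^{(k)}\le 2\eta_{n+k}$. This gives
\begin{equation*}
(\eta_n^{(k)})^{-1/2n}\ge 2^{-1/2n}\,\eta_{n+k}^{-1/2n}.
\end{equation*}

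The main obstacle is the mismatch between the exponent $1/2n$ and the index $n+k$. I plan to handle it with the log-convexity of moments, which follows from Cauchy--Schwarz (or Hölder): the sequence $\eta_m^{1/m}$ is nondecreasing and $\eta_m\ge \eta_0^{\,1-m/(m+k)}\ldots$. Concretely, Hölder gives $\eta_{n}\ge \eta_0^{k/(n+k)}\eta_{n+k}^{n/(n+k)}$ in reverse direction, which is not the inequality I need. Instead I use the monotonicity directly: $\eta_{n+k}^{1/(n+k)}$ is nondecreasing, so
\begin{equation*}
\eta_{n+k}^{1/2n}=\left(\eta_{n+k}^{1/2(n+k)}\right)^{(n+k)/n},
\end{equation*}
and the exponent $(n+k)/n\to1$.

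If $\eta_m^{1/2m}$ stays bounded, then $\eta_{n+k}^{-1/2n}$ is bounded below by a positive constant and the series diverges trivially. Otherwise I compare the terms with $\eta_{n+k}^{-1/2(n+k)}$, a shifted tail of the divergent series for $\nu$. The difficulty is that the factor $\eta_{n+k}^{-k/(2n(n+k))}$ may tend to $0$.

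To control it, I would use the standard equivalent form of Carleman's condition in which $\eta_m^{1/2m}$ is replaced by its log-convex regularisation, together with the known fact that Carleman's condition is invariant under shifts of the moment sequence, $\eta_m\mapsto\eta_{m+k}$. This is the classical statement that $x^k\,d\nu$ is determinate in the Carleman sense whenever $\nu$ is. If I prefer a self-contained argument, I would first reduce to $k=1$ by induction. I would then use the Cauchy--Schwarz bound
\begin{equation*}
\eta_{n+1}^2\le \eta_n\,\eta_{n+2}
\end{equation*}
and apply a Cauchy-condensation-type argument on the subsequence where $\eta_{n+1}\le C^n\eta_n$, which must occur along enough indices for the series to diverge.
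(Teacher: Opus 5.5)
Your treatment of positivity and of $k\le 0$ is correct. For $k\le 0$ the bound $r_k\le 1$ is simpler than the paper's split at $x=1$. For $k>0$ you follow the paper's route: split at $x=1$ and dominate by shifted moments, $\eta_n^{(k)}\le \eta_0+\eta_{n+k}$. The paper does the same one step at a time, $m_{n,k}\le m_{0,k-1}+m_{n+1,k-1}$, and then inducts.

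\textbf{The gap.} You never prove $\sum_n \eta_{n+k}^{-1/(2n)}=\infty$, which is exactly the step you flag. Citing a ``known'' shift invariance does not settle it. The log-convex regularisation is a red herring: $(\eta_m)$ is already log-convex by Cauchy--Schwarz, and the problem is only the mismatch between the exponent and the index. Your self-contained sketch assumes its conclusion, namely that indices with $\eta_{n+1}\le C^n\eta_n$ occur ``along enough indices''. The paper passes over this same point silently, so you have found the real crux, but the proposal leaves it open.

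\textbf{How to close it.} The step is elementary. Put $u_n=\eta_{n+k}^{-1/(2(n+k))}$. Then $\sum_n u_n=\infty$, being a tail of the Carleman series for $\nu$, and $\eta_{n+k}^{-1/(2n)}=u_n\,u_n^{k/n}$. Let $E=\{n:\ u_n\ge n^{-2}\}$.
\begin{itemize}
\item Since $\sum_{n\notin E}u_n\le \sum_n n^{-2}<\infty$, we get $\sum_{n\in E}u_n=\infty$.
\item For $n\in E$, monotonicity of $t\mapsto t^{k/n}$ gives $u_n^{k/n}\ge n^{-2k/n}=e^{-2k\ln n/n}\ge e^{-2k/e}$.
\item Hence $\sum_n \eta_{n+k}^{-1/(2n)}\ge e^{-2k/e}\sum_{n\in E}u_n=\infty$.
\end{itemize}
Combined with $(\eta_0+\eta_{n+k})^{-1/(2n)}\ge 2^{-1/(2n)}\eta_{n+k}^{-1/(2n)}$ for large $n$, this proves the case $k>0$ directly, with no induction.

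\textbf{A smaller error.} Your reason for absorbing $\eta_0$ is wrong as stated. Not being a finite sum of point masses does not force $\eta_m\to\infty$; a measure on $[0,1/2]$ has $\eta_m\to 0$. What you need is that $\nu$ charges $[2,\infty)$. This holds because $\nu'>0$ a.e.\ on $\RRp$, so $\eta_m\ge 2^m\,\nu([2,\infty))\to\infty$.
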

\begin{proof} {We now proceed by induction. Obviously, the~initial  case $k=0$ is given by~hypothesis.}

\begin{itemize}
\item \emph{Case $k>0$.} Assume that   $\dsty d\nu_{r_j}(x)\in \med$ for all $j\leq k-1$. Since $\dsty d\nu_{r_{k}}(x)=\left(\frac{x+1}{2}\right) d\nu_{r_{k-1}}(x)$, it is immediate that  $\dsty d\nu_{r_k}(x)$ is positive and $\frac{d\nu_{r_k}(x)}{dx}>0$ a.e. on $\RRp$.
\end{itemize}

Let $m_{n,k}$ be the  $n$th moment of the measure $\dsty d\nu_{r_{k}}(x)$, then
\begin{align*}
m_{n,k} & = \int_0^\infty x^n   d\nu_{r_{k}}(x) = \int_0^1 x^n \left(\frac{x+1}{2}\right) d\nu_{r_{k-1}}(x)+\int_1^\infty x^n \left(\frac{x+1}{2}\right)  d\nu_{r_{k-1}}(x),\nonumber\\
& \leq  \int_0^1   d\nu_{r_{k-1}}(x)+\int_1^\infty x^{n+1}   d\nu_{r_{k-1}}(x) \leq m_{0,k-1}+ m_{n+1,k-1},%\label{inequality:choosen}
\end{align*}
where we use that $\dsty x^n \left(\frac{x+1}{2}\right)\leq 1$ for  $\dsty x\in [0,1]$ and $\dsty  \left(\frac{x+1}{2}\right)\leq x$, for~ $x\in [1,+\infty)$. Then, using induction hypothesis, we obtain that $m_{n,k}< \infty$ and the sequence of moments for $\dsty  d\nu_{r_{k}}(x)$  satisfies Carleman's~condition.

\begin{itemize}
\item \emph{Case $k<0$.} Repeating the previous arguments, we obtain  that   if $\dsty d\nu_{r_{j}}(x)\in \med$ for all $0 < j\leq k+1$ then $\dsty d\nu_{r_{k}}(x)$ is positive and $\frac{d\nu_{r_{k}}(x)}{dx}>0$ a.e. on $\RRp$.
\end{itemize}

For the   $n$th moment of the measure $\dsty d\nu_{r_{k}}(x)$, we have
\begin{align*}
m_{n,k} & = \int_0^\infty x^n  d\nu_{r_{k}}(x)=  \int_0^1 x^n \left(\frac{2}{x+1}\right) d\nu_{r_{k+1}}(x)+\int_1^\infty x^n \left(\frac{2}{x+1}\right) d\nu_{r_{k+1}}(x)\\
&  \leq   \, 2 \, m_{0,k+1} + m_{n,k+1},\nonumber
\end{align*}
where we use that $\dsty x^n \left(\frac{2}{x+1}\right)\leq 2$ for  $\dsty x\in [0,1]$ and $\dsty  \left(\frac{2}{x+1}\right)\leq 1$, for~ $x\in [1,+\infty)$. Then, using induction hypothesis, we obtain that $m_{n,k}< \infty$ and the sequence of moments for $\dsty  d\nu_{r_{k}}(x)$  satisfies Carleman's condition.
\end{proof}

\begin{lemma}{\cite[Th. 4, Cor. 1]{Lago89}} \label{Lago-1} Let  $P_{n,k}$ be the $k$th monic orthogonal polynomial with respect to $d\sigma_{n}$. If~ $\sigma^{\prime}>0$ a.e. on $[-1,1]$ and  $ \dsty \sum_{n=1}^{\infty} \frac{1}{\sqrt[2n]{\varsigma_n}}=+ \infty,$ then, for~each integer $k$
\begin{equation*}%\label{AsimpRelativ-Varying}
 \frac{P_{n,n-k+1}}{P_{n,n-k}}(z) \unifn \frac{\varphi(z)}{2}; \quad K\subset \CC \setminus [-1,1],
\end{equation*} where $\varphi(z)= z+\sqrt{z^2-1}$ $ \left( \left|  z+\sqrt{z^2-1} \right|>1 \quad z \in \CC \setminus [-1,1] \right)$.
\end{lemma}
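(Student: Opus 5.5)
The plan is to reduce the ratio asymptotics to the limiting behaviour of the recurrence coefficients of the varying family $\{P_{n,m}\}$, and then to solve a fixed-point equation. This mirrors the classical Rakhmanov--Máté--Nevai--Totik scheme for fixed measures, adapted to the varying weights $(1-t)^{-2n}$.

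\textbf{Step 1: recurrence and normality.} For fixed $n$, the polynomials $P_{n,m}$ are monic orthogonal with respect to the positive measure $d\sigma_n$ on $[-1,1]$. They therefore satisfy
$$
tP_{n,m}(t)=P_{n,m+1}(t)+\beta_{n,m}P_{n,m}(t)+\alpha_{n,m}^2P_{n,m-1}(t),
$$
with all zeros simple and contained in $(-1,1)$. Hence $\beta_{n,m}\in[-1,1]$ and $0<\alpha_{n,m}\le 1$. The functions $f_{n,k}=P_{n,n-k+1}/P_{n,n-k}$ are analytic and zero-free on $\overline{\CC}\setminus[-1,1]$, behave like $z$ at infinity, and satisfy
$$
|f_{n,k}(z)|\le |z|+1 \quad\text{and}\quad |f_{n,k}(z)|\ge \operatorname{dist}(z,[-1,1])
$$
(the latter from the partial fraction expansion of $P_{n,m}/P_{n,m+1}$). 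Consequently $\{f_{n,k}/z\}_n$ is a normal family on $\overline{\CC}\setminus[-1,1]$, and every limit is nonvanishing there.

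\textbf{Step 2: reduction to the coefficients.} I will show that for each fixed $k$,
$$
\lim_n \alpha_{n,n-k}=\tfrac12 \quad\text{and}\quad \lim_n\beta_{n,n-k}=0 .
$$
Granting this, take any subsequence along which $f_{n,k}\to f_k$ for all $k$ (diagonal argument). Dividing the recurrence by $P_{n,n-k}$ and letting $n\to\infty$ gives
$$
f_{k}(z)=z-\frac{1}{4 f_{k+1}(z)}
$$
for all $k$. The limits are analytic in $\overline{\CC}\setminus[-1,1]$ with $f_k(z)/z\to1$ at infinity. A standard uniqueness argument (iterate the Möbius map $w\mapsto z-1/(4w)$, which is contracting toward its attracting fixed point $\varphi(z)/2$ for $z\notin[-1,1]$) forces $f_k=\varphi/2$ for all $k$. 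Since every subsequential limit is the same, the whole sequence converges uniformly on compacta.

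\textbf{Step 3: the coefficients (main obstacle).} This is where both hypotheses enter. I would follow López's idea of linking consecutive $n$: since $d\sigma_n=(1-t)^2d\sigma_{n+1}$, the family $\{P_{n,m}\}$ is a Christoffel transform of $\{P_{n+1,m}\}$. The kernel polynomials then give explicit formulas expressing $\alpha_{n,m},\beta_{n,m}$ through $\alpha_{n+1,m+j},\beta_{n+1,m+j}$, $|j|\le 2$, and the values $P_{n+1,m}(1)$. The argument then has two parts.
\begin{itemize}
\item[(i)] The condition $\sigma'>0$ a.e. gives, through the Rakhmanov/Máté--Nevai--Totik extremal inequality, the lower bound $\liminf_n \alpha_{n,n-k}\ge\tfrac12$. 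Applied to the varying measures, this reads $\liminf_n \|P_{n,n-k}\|_{\sigma_n}^{1/n}\cdots$, compared with the Chebyshev extremal constant on $[-1,1]$.
\item[(ii)] The upper bound and $\beta\to0$ require controlling the mass of $d\sigma_n$ near $t=1$, which grows like $\varsigma_{2n}$. Here I would use the Carleman-type divergence $\sum\varsigma_n^{-1/2n}=+\infty$. It implies, via Carleman's inequality for quasi-analytic classes, that $\frac1n\log\bigl(|P_{n,n-k}(1)|^{-1}\cdots\bigr)$ does not blow up. In other words, the normalized zero counting measures of $P_{n,n-k}$ converge to the arcsine distribution and do not accumulate abnormally at $1$. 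This yields $\limsup\alpha_{n,n-k}\le\tfrac12$ and $\beta_{n,n-k}\to0$ through the comparison of $n$th-root asymptotics.
\end{itemize}

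I expect (ii) to be the genuine difficulty. The point $t=1$ is where the varying weight degenerates, and the Carleman condition is exactly what replaces boundedness of $\mu$'s support. If it fails, I would instead try to prove directly that $\int |P_{n,n-k}|^2 d\sigma_n$ and the corresponding extremal problem on $[-1,1-\epsilon]$ have the same $n$th-root behaviour, using determinacy of the moment problem to show that polynomials are dense in $L^2$ of the relevant measure.
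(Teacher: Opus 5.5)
Your Steps 1--2 are a standard and essentially sound reduction, with two small repairs. First, the upper bound should read $|f_{n,k}(z)|\le |z|+1+1/\operatorname{dist}(z,[-1,1])$, since $P_{n,m-1}/P_{n,m}$ is the Stieltjes transform of a probability measure on $[-1,1]$. Second, the M\"obius argument should be run for $|z|>2$: there $|f_{k+j}(z)|\ge |z|-1$ keeps all iterates uniformly away from the repelling fixed point $1/(2\varphi(z))$, whose modulus is $<\tfrac12$, and the identity theorem then gives $f_k=\varphi/2$ everywhere. The real problem is that this reduction does not reduce anything. At infinity $f_{n,k}(z)=z-\beta_{n,n-k}-\alpha_{n,n-k}^2z^{-1}+O(z^{-2})$, so uniform convergence of $f_{n,k}$ on a circle $|z|=R$ already forces $\beta_{n,n-k}\to 0$ and $\alpha_{n,n-k}\to\tfrac12$. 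The statement you postpone to Step 3 is therefore equivalent to the lemma itself. Step 3 is the entire proof, and it is not carried out.

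What you offer for Step 3 cannot work as stated.

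\emph{Part (ii).} You try to get $\limsup\alpha_{n,n-k}\le\tfrac12$ and $\beta_{n,n-k}\to0$ from $n$th-root information: zero distribution tending to the arcsine law, and no exponential blow-up at $t=1$. Such information only controls geometric means of the recurrence coefficients, never their individual limits. Already for a fixed measure, take Jacobi parameters $b_m=0$, with $a_m=\tfrac12$ except $a_{m_j}=\tfrac14$ on a sparse sequence $m_j$. This gives a measure with support $[-1,1]$ that is regular and whose zero-counting measures tend to the arcsine distribution, yet $a_m$ does not converge.

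\emph{Part (i).} You simply assert a varying-measure analogue of the Rakhmanov/M\'at\'e--Nevai--Totik theorem. Transferring that argument from a fixed weight to $d\sigma/(1-t)^{2n}$, whose density degenerates at the endpoint $t=1$ of the support, is exactly the content of the result. You never say how the Carleman divergence enters that argument, and the split of roles between the two hypotheses is unjustified.

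\emph{The Christoffel-transform formulas.} The formulas linking $\sigma_n$ and $\sigma_{n+1}$ only couple neighbouring diagonals. They also involve the values $P_{n+1,m}(1)$ at the endpoint, where no asymptotics are known. So they can transport limits but cannot produce them.

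For comparison, the paper does not prove this lemma at all. It imports it from L\'opez's theory of orthogonal polynomials with respect to varying measures, where a genuine Rakhmanov-type argument is carried out for the weights $(1-t)^{-2n}$, with the Carleman condition compensating for the endpoint degeneration. To turn your proposal into a proof you would have to supply that argument, not a sketch of it.
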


\begin{lemma}\label{Th-AsympCocient}  Assume  $\mu \in \med$ and $\dsty  d\mu_{m}(x)=\left(\frac{2}{x+1}\right)^{2m}d\mu(x)$, with~$m \in {\ZZ}$.

\begin{enumerate}
  \item[(a)] Let $\ell_{m,n}$  be the $n$th orthogonal polynomial with respect to $\dsty  \mu_{m}$, normalized by the condition $\ell_{m,n}(-1)=(-1)^n$, then for $d \in \ZZp$, on~$ K \subset \CC \setminus \RRp$ it holds
\begin{equation}\label{AsimpRatioOP}
\frac{\ell_{m,n+m}^{(d)}(z)}{\ell_{k,n+k}^{(d)} (z)}\unifn \left(\frac{z+1}{4}\right)^{m-k} \Phi^{m-k}(z)= \left(\frac{\sqrt{z}+i}{2}\right)^{2(m-k)} .
\end{equation}

  \item[(b)] Let  $L_{m,n}$ be the $n$th monic orthogonal polynomial with respect to $\dsty \mu_{m}$, then on $ K \subset \CC \setminus \RRp$ it~holds
\begin{equation}\label{AsimpRatioOP-Monic}
\frac{L_{m,n+m}^{(d)}(z) }{L_{k,n+k}^{(d)}(z) }\unifn \left(z+1\right)^{m-k} \Phi^{m-k}(z)= \left(\sqrt{z}+i\right)^{2(m-k)}.
\end{equation}
\end{enumerate}

\end{lemma}

\begin{proof}

\emph{(Proof of a).}  Taking  $\dsty  d\sigma_n(t)= (1-t)^{1-2n} d\mu\left(\Psi(t)\right)$, from~the assumptions and Lemma~\ref{Carleman-Equiv},  we obtain that $d\sigma_{n}$ is a finite positive Borel measure on $[-1,1]$, $\sigma_{n}^{\prime}>0$ a.e. on $[-1,1]$ and $\dsty \sum_{n=1}^{\infty} \varsigma_n^{-1/(2n)}=+ \infty$, where $\varsigma_n$ is as in \eqref{VaryingMeasure}. {Since we are interested in the asymptotic formula, we can assume $n+m>1$.}

Let  $P_{n,k}$ be the $k$th monic orthogonal polynomial with respect to $d\sigma_{n}$ and  denote $\dsty \ell^*_{m,n+m}(z)=\left(\frac{z+1}{2}\right)^{n+m} P_{n,n+m}\left(\Psi^{-1}(z)\right)$. After~a change of variable $\dsty x=\Psi(t)$ in the next  integral,  we obtain
\begin{align}\nonumber
 \int_{0}^{\infty}  \left(\frac{x+1}{2}\right)^{k}  \ell^*_{m,n+m}(x)d\mu_m(x)  & = \int_{-1}^{1}\frac{1}{\left(1-t\right)^{n+m+k} } \;  P_{n,n+m}\left(t\right) \left(1-t\right)^{2m} d\mu\left(\Psi(t)\right)  \\ \nonumber
& =  \int_{-1}^{1}  \left({1-t}\right)^{n+m-1-k}   P_{n,n+m}\left(t\right) \; \frac{d\mu\left(\Psi(t)\right)}{(1-t)^{2n-1}}\\ \label{AsimpRatioOP-01}
 = &  \int_{-1}^{1}  \left({1-t}\right)^{n+m-1-k}   P_{n,n+m}\left(t\right) \; d\sigma_{n}\left(t\right)=0,\\ \nonumber
 & \quad \text{for } k= 0,1,\cdots, n+m-1.\\ \label{AsimpRatioOP-02}
  \ell^*_{m,n+m}(-1)= & \lim_{z \to -1} \left(\frac{z+1}{2}\right)^{n+m} P_{n,n+m}\left(\Psi^{-1}(z)\right)=(-1)^{n+m}.
\end{align}
From \eqref{AsimpRatioOP-01} and \eqref{AsimpRatioOP-02}, we have $\ell_{m,n+m}=\ell^*_{m,n+m}$. Therefore,
\vspace{6pt}
\begin{align}
\ell_{m,n+m}(z)& =\left(\frac{z+1}{2}\right)^{n+m} P_{n,n+m}\left(\Psi^{-1}(z)\right), \label{ConectionFormula02}\\
 \frac{\ell_{m,n+m}(z) }{(1+z)^{m-k}\, \ell_{k,n+k}(z) } & =\frac{ P_{n,n+m}\left(\Psi^{-1}(z)\right)}{2^{m-k} P_{n,n+k}\left(\Psi^{-1}(z)\right)}\nonumber\\
 & =  \frac{1}{2^{m-k}}  \prod_{j=k}^{m-1}\frac{ P_{n,n+j+1}\left(\Psi^{-1}(z)\right)}{P_{n,n+j}\left(\Psi^{-1}(z)\right)}.\nonumber
\end{align}

From Lemma \ref{Lago-1}, for~$j=k,\ldots,m-1$;
$$
\frac{ P_{n,n+j+1}\left(\Psi^{-1}(z)\right)}{P_{n,n+j}\left(\Psi^{-1}(z)\right)} \unifn \frac{\varphi\left(\Psi^{-1}(z)\right)}{2}; \quad K\subset \CC \setminus\RRp.
$$
Thus,
\begin{equation*}%\label{AsimpRatioOP-03}
\frac{\ell_{m,n+m}(z) }{\ell_{k,n+k}(z)}\unifn \left(\frac{z+1}{4}\right)^{m-k} \varphi^{m-k}\left(\Psi^{-1}(z)\right); \quad K\subset \CC \setminus\RRp,
\end{equation*}
which  establishes  \eqref{AsimpRatioOP}  for $d=0$. In~order to proof \eqref{AsimpRatioOP}  for $d>0$, we proceed by induction on $d$.
\begin{equation*}
\frac{\ell_{m,n+m}^{(d+1)}(z)}{\ell_{k,n+k}^{(d+1)}(z)}= \frac{\ell_{m,n+m}^{(d)}(z)}{\ell_{k,n+k}^{(d)}(z)}+ \frac{\ell_{k,n+k}^{(d)}(z)}{\ell_{k,n+k}^{(d+1)}(z)}\cdot \left(\frac{\ell_{m,n+m}^{(d)}(z)}{\ell_{k,n+k}^{(d)}(z)}\right)^{\prime}.%\label{AsimpRatioOP-05}
\end{equation*}
Assume that formula \eqref{AsimpRatioOP} holds  for $d \in \ZZp$, then  $\dsty \left({\ell_{m,n+m}^{(d)}}/{\ell_{0,n}^{(d)}}\right)^{\prime}$ is uniformly bounded on  compact subsets $K \subset \CC \setminus \RRp$. Note that $\dsty {\ell_{0,n}^{(d)}}/{\ell_{0,n}^{(d+1)}}\unifn 0$ on $ K\subset \CC \setminus\RRp$. This is  proved using an analogous of (\cite{LopMarWal95} (2.9)), and~the Bell's polynomials version of the Faa Di Bruno formula, see (\cite{johnson03} pp. 218, 219).  The~assertion (a) is~proved.

\emph{(Proof of b).} Write $\dsty f_{d,m,n}(z)=\frac{\ell_{m,n+m}^{(d)}(z) }{z^m \; \ell_{0,n}^{(d)}(z) }$ and let $\kappa_{m,n+m}$ be  the leading coefficient of $\ell_{m,n+m}$.
Hence,   for~$d>1$
\begin{align*}
f_{d,m,,k,n}(\infty)& =\frac{(n+m)\cdots(n+m-d+1)\kappa_{m,n+m}}{(n+k)\cdots(n+k-d+1)\kappa_{k,n+k}} \\
f_{0,m,k,n}(\infty)& =\frac{\kappa_{m,n+m}}{\kappa_{k,n+k}}.
\end{align*}
   From \eqref{AsimpRatioOP},
\begin{align}\label{AsimpRatioOP-Monic-01}
f_{d,m,k,n}(z) & \unifn \left(\frac{z+1}{4z}\right)^{m-k} \Phi^{m-k}(z); \quad  K \subset \overline{\CC} \setminus \RRp, \quad l \in \mathbb{Z_{+}}.\\ \label{AsimpRatioOP-Monic-02}
\lim_{n \to \infty} f_{d,m,k,n}(\infty) & = \lim_{n \to \infty} \frac{\kappa_{m,n+m}}{\kappa_{k,n+k}}=\left(\frac{1}{2}\right)^{2(m-k)}.
\end{align}
As  $L_{m,n+m}^{(d)}(z)= \frac{\ell_{m,n+m}^{(d)}(z)}{\kappa_{m,n+m}}$ for $d\geq 1$, from~\eqref{AsimpRatioOP-Monic-01}--\eqref{AsimpRatioOP-Monic-02}, we get \eqref{AsimpRatioOP-Monic}. \end{proof}

Denote by $\mathfrak{M}[-1,1]$ the class of admissible measures in $[-1,1]$ defined in (\cite{Lago89-1} Sec. 5).
Let $\sigma_n$ a positive varying Borel measure supported on $[-1,1]$ and
$$p_{n,m}(w)=\tau_{n,m}w^m+\cdots,\quad \tau_{n,m}>0$$
be the $m$th orthonormal polynomial with respect to $\sigma_n$, then (\cite{Lago89-1} Th. 7)
\begin{equation}
\lim_{n\to \infty}\frac{\tau_{n,n+k+1}}{\tau_{n,n+k}}=2, \qquad k\in\ZZ. \label{RatioTau}
\end{equation}

\begin{lemma}\label{LemmaIntegral}
Let $\sigma_n$ be an admissible measure, then for all $v\in\ZZ$,
\begin{equation}
\int_{-1}^1 \frac{p_{n,n+v}(t)p_{n,n}(t)}{w-t} d\sigma_n(t)\unifn \frac{1}{\varphi^{|v|}(w)\sqrt{w^2-1}};\qquad  K\subset \CC\setminus [-1,1]. \label{IntegralIdentity}
\end{equation}
\end{lemma}
\begin{proof}

This proof is based on the proof of (\cite{LopMarWal95} Lemma 2). Without~loss of generality, let us consider $v\in \ZZp$. Applying the  Cauchy--Schwarz inequality we have, for~$z\in K\subset\overline{\CC}\setminus [-1,1]$
$$\left|\int_{-1}^{1}\frac{p_{n,n+v}(t)p_{n,n}(t)}{w-t} d\sigma_n(t)\right|\leq \frac{1}{d(K,[-1,1])}<\infty,$$
where $d(K,[-1,1])$ denotes the Euclidian distance between the two sets. Thus, for~ (fixed) values of  $v\in \ZZp$, the~sequence of functions in the left hand side of \eqref{IntegralIdentity} is normal. Thus, we deduce uniform convergence from pointwise convergence. The~pointwise limit follows from (\cite{Lago89-1} Th. 9)
$$\lim_{n\to \infty}\int_{-1}^{1}\frac{p_{n,n+v}(t)p_{n,n}(t)}{w-t} d\sigma_n(t)= \frac{1}{\pi}\int_{-1}^1 \frac{T_v(t)}{w-t}\frac{dt}{\sqrt{1-t^2}},$$
here, $T_v$ is the $vth$ Chebyshev orthonormal polynomial of the first kind. Therefore, \eqref{IntegralIdentity} holds if we prove that
\begin{equation}\label{eq2.13}
\frac{1}{\pi}\int_{-1}^1 \frac{T_v(t)}{w-t}\frac{dt}{\sqrt{1-t^2}}= \frac{1}{\varphi^{v}(w)\sqrt{w^2-1}}.
\end{equation}
{Note that $T_0(t)=1,T_1(t)=t$, and, for~$v\geq 1$},
$$2tT_v(t)=T_{v+1}(t)+T_{v-1}(t),$$
or equivalently
\begin{equation}\label{ChevyshevTTRR}
T_{v+1}=2tT_v-T_{v-1}.
\end{equation}
Next, proceed by induction. Start at $v=0$, expression \eqref{IntegralIdentity}, is obtained from the residue theorem and Cauchy's integral formula. Then, for~$v=1$ we have
\begin{align*}
\frac{1}{\pi} \int_{-1}^1 \frac{T_1(t)}{w-t}\frac{dt}{\sqrt{1-t^2}}= & \frac{w}{\pi}\int_{-1}^1 \frac{1}{w-t}\frac{dt}{\sqrt{1-t^2}}-\frac{1}{\pi} \int_{-1}^1 \frac{dt}{\sqrt{1-t^2}} \\
= & {\frac{w}{\sqrt{w^2-1}}-1} =  \frac{1}{\varphi(w)\sqrt{w^2-1}}.
\end{align*}
Assume \eqref{eq2.13} holds for $v=0,1,\dots,k$; $k\geq 1,$  we will prove that it also holds for $v= k+1$. Combining \eqref{ChevyshevTTRR} and the hypothesis of induction, we~obtain
\vspace{6pt}
\begin{align*}
\frac{1}{\pi} \int_{-1}^{1} \frac{T_{k+1}(t)}{w-t}\frac{dt}{\sqrt{1-t^2}}
 = & \frac{1}{\pi}\int_{-1}^1 \frac{2tT_k(t)}{w-t}\frac{dt}{\sqrt{1-t^2}}-\frac{1}{\pi}\int_{-1}^1 \frac{T_{k-1}(t)}{w-t}\frac{dt}{\sqrt{1-t^2}}\\
= & \frac{2z}{\pi}\int_{-1}^1 \frac{T_k(t)}{w-t}\frac{dt}{\sqrt{1-t^2}}-\frac{1}{\pi}\int_{-1}^1 \frac{T_{k-1}(t)}{w-t}\frac{dt}{\sqrt{1-t^2}}\\
=& \frac{1}{\varphi^{k-1}(w)\sqrt{w^2-1}}\left(\frac{2w}{\varphi(z)}-1\right)\\
=& \frac{1}{\varphi^{k+1}(w)\sqrt{w^2-1}},
\end{align*}
which we wanted to prove.
\end{proof}

\begin{lemma}\label{LemmaOfIntegrals}
Let $d\mu(x)= \left(\frac{x+1}{2}\right)^{A-B}  d\nu(x)$, where $A,B\in \ZZp$, and~$d\nu \in \med$. We have on compact subsets of $ \CC\setminus \RRp$

\begin{multline*}
\displaystyle (v-1)!\tau^2_{n,n-B} \int_0^\infty \left(\frac{x+1}{2}\right)^{k}\frac{\ell_{A-k,n+A-k}(x) \ell_{-B,n-B}(x)}{(x-z)^v} d\nu(x)  \\
\displaystyle \unifn   \left(\frac{-1}{(1+z)\left(2\Phi\left(z\right)\right)^{A+B-k}\sqrt{\left(\Psi^{-1}(z)\right)^2-1}}\right)^{(v-1)}.
\end{multline*}
where $\ell_{n,n+m}$ is defined as in Lemma \ref{Th-AsympCocient}.
\end{lemma}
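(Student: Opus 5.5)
Throughout, \(\tau_{n,n-B}\) is read as the leading coefficient of the orthonormal polynomial \(p_{n,n-B}\) attached to \(\sigma_n\), as fixed before \eqref{RatioTau}. The plan is to pull the integral back to \([-1,1]\) through \(x=\Psi(t)\), exactly as in the proof of Lemma~\ref{Th-AsympCocient}(a), and then quote Lemma~\ref{LemmaIntegral}.

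\textbf{Reduction to \(v=1\).} By Lemma~\ref{Lemma:factor(x+1)}, every measure \(\left(\frac{x+1}{2}\right)^{j}d\nu(x)\), \(j\in\ZZ\), lies in \(\med\), so all the varying measures used below are admissible. For \(z\) in a compact set \(K\subset\CC\setminus\RRp\), differentiation under the integral sign gives
\[
\frac{d^{v-1}}{dz^{v-1}}\frac{1}{x-z}=\frac{(v-1)!}{(x-z)^{v}}.
\]
Hence the left-hand side is the \((v-1)\)th derivative of the case \(v=1\). Uniform convergence of holomorphic functions on compact sets carries over to derivatives (Weierstrass). So it suffices to prove the case \(v=1\) and then differentiate; this is where the factor \((v-1)!\) and the outer \((\cdot)^{(v-1)}\) come from.

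\textbf{Pulling back to \([-1,1]\).} By \eqref{ConectionFormula02},
\[
\ell_{m,n+m}(x)=\left(\frac{x+1}{2}\right)^{n+m}P_{n,n+m}(t).
\]
With \(x=\Psi(t)\) we have \(\frac{x+1}{2}=\frac{1}{1-t}\), and
\[
x-z=\Psi(t)-\Psi(w)=\frac{2(t-w)}{(1-t)(1-w)},\qquad w=\Psi^{-1}(z).
\]
Substitute \(m=A-k\) and \(m=-B\), together with \(d\nu=\left(\frac{x+1}{2}\right)^{B-A}d\mu\). Collecting all powers of \((1-t)\) gives
\[
\int_{-1}^{1}(1-t)^{s}\,\frac{P_{n,n+A-k}(t)\,P_{n,n-B}(t)}{t-w}\,d\sigma_n(t),
\]
where \(\sigma_n\) is the varying measure from Lemma~\ref{Th-AsympCocient} (possibly shifted by a fixed power of \((1-t)\), which is still admissible). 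The exponent \(s\) is an integer independent of \(n\); it should come out as \(-(A+B-k)\) up to a fixed shift. The prefactor is a constant multiple of \((1-w)\), and \(1-w=\frac{2}{1+z}\) produces the factor \(1/(1+z)\) in the target.

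Next I will use \(p_{n,j}=\tau_{n,j}P_{n,j}\). I will also write
\[
(1-t)^{s}=(1-w)^{s}+(t-w)R(t),
\]
where \(R\) is a polynomial in \(t\) of fixed degree \(|s|\) (or the analogous expression when \(s<0\)). By orthogonality, the \(R\)-term vanishes when its degree is smaller than the index gap, and otherwise it contributes only a bounded number of inner products.

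\textbf{Limits.} Once reduced, the main term is
\[
\frac{\tau_{n,n-B}}{\tau_{n,n+A-k}}\int_{-1}^{1}\frac{p_{n,n+A-k}\,p_{n,n-B}}{w-t}\,d\sigma_n .
\]
After reindexing the measure so that the lower index becomes \(n\), Lemma~\ref{LemmaIntegral} with \(|v|=A+B-k\) gives the limit \(\bigl(\varphi^{A+B-k}(w)\sqrt{w^2-1}\bigr)^{-1}\).

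The \(\tau\) ratios follow from \eqref{RatioTau}: \(\tau_{n,n+A-k}/\tau_{n,n-B}\to 2^{A+B-k}\). This produces the factor \(2^{A+B-k}\) in the target.

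Finally, the identity \(\varphi(\Psi^{-1}(z))=\pm\Phi(z)\), with the branch fixed by \(|\varphi|>1\), turns \(\varphi(w)\) into \(\Phi(z)\). The sign \(-1\) comes from \(t-w=-(w-t)\).

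\textbf{Main obstacle.} The delicate step is the bookkeeping of the powers of \((1-t)\). One must check that the leftover factor \((1-t)^{s}\) either contributes exactly \((1-w)^{s}\) or produces error terms that vanish. This relies on orthogonality of \(p_{n,n+A-k}\) against polynomials of degree below \(n+A-k\), and on Lemma~\ref{LemmaIntegral} being applicable after the varying measure is shifted by a fixed power of \((1-t)\). I would handle this first by treating \(s\ge0\) through the division identity above, and \(s<0\) by absorbing \((1-t)^{s}\) into the measure, using Lemma~\ref{Lemma:factor(x+1)} to keep admissibility. After that, the normalizing constants only need to be matched to the factor \(\tau^2_{n,n-B}\).
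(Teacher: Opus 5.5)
Your overall route is the paper's. You pull back through $x=\Psi(t)$ with \eqref{ConectionFormula02} and use $\frac{1}{x-z}=\frac{1-t}{(1+z)(t-w)}$ with $w=\Psi^{-1}(z)$. You then apply Lemma~\ref{LemmaIntegral} with \eqref{RatioTau}, and get $v>1$ by Weierstrass differentiation, which is a cleaner form of the paper's normal-family step.

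The gap is at the step you flag as the main obstacle. You never compute the exponent $s$, your guess ``$-(A+B-k)$ up to a fixed shift'' is not what happens, and your fallback for $s\neq 0$ would not yield the stated formula. The count is one line:
\begin{itemize}
\item $\ell_{A-k,n+A-k}$, $\ell_{-B,n-B}$ and $\left(\frac{x+1}{2}\right)^k$ contribute $(1-t)^{-(n+A-k)-(n-B)-k}$;
\item the kernel contributes $(1-t)^{+1}$;
\item $d\nu(\Psi(t))=(1-t)^{A-B}\,d\mu(\Psi(t))=(1-t)^{A-B+2n-1}\,d\sigma_n(t)$.
\end{itemize}
The total is $s=0$. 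So for $v=1$ the left-hand side is exactly
\[
\frac{1}{1+z}\,\frac{\tau_{n,n-B}}{\tau_{n,n+A-k}}\int_{-1}^{1}\frac{p_{n,n+A-k}(t)\,p_{n,n-B}(t)}{t-w}\,d\sigma_n(t).
\]
Here $\sigma_n$ is precisely the unshifted measure of Lemma~\ref{Th-AsympCocient}, with respect to which the $P_{n,j}$ in the connection formula are orthogonal. From this point your remaining steps give the target.

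This exact cancellation is the content of the lemma, not bookkeeping. A leftover $(1-t)^s$ with $s\neq 0$ would change the limit. Your own division identity already puts an extra factor $(1-w)^s=\left(\frac{2}{1+z}\right)^s$ into the main term, which is absent from the target. For $s<0$, absorbing $(1-t)^s$ into the measure fails outright: it destroys the orthogonality of $P_{n,n+A-k}$ and $P_{n,n-B}$ with respect to the measure, and Lemma~\ref{LemmaIntegral} depends on that orthogonality.

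Two smaller points.
\begin{itemize}
\item With the paper's branch, $\operatorname{Im}\sqrt{z}>0$ off $\RRp$, so $|\Phi(z)|>1$ and $\varphi(\Psi^{-1}(z))=\Phi(z)$ exactly. The other root is $1/\Phi(z)$, not $-\Phi(z)$.
\item Your reindexing $n\mapsto n-B$ correctly puts the integral in the form of Lemma~\ref{LemmaIntegral}, with lower index equal to the varying parameter. It replaces $\mu$ by $\left(\frac{x+1}{2}\right)^{2B}d\mu$, which stays in $\med$. The paper passes over this point.
\end{itemize}
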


\begin{proof}
First, the~sequence $\{\ell_{n,n+m}\}_{n\geq 0}$ is well defined because the measure $d\nu \in \med$, implies $d\mu\in \med$ (see Lemma \ref{Lemma:factor(x+1)}).

Let us use the connection formula \eqref{ConectionFormula02} and the change of variable \eqref{ChangeVariables} to obtain
\begin{align*}
& (v-1)! \tau^2_{n,n-B} \int_0^\infty \left(\frac{x+1}{2}\right)^{k}\frac{\ell_{A-k,n+A-k}(x) \ell_{-B,n-B}(x)}{(x-z)^v} d\nu(x),\\
&= (v-1)! \tau^2_{n,n-B} \int_{-1}^1 \frac{P_{n,n+A-k}(t)P_{n,n-B}(t)}{\left(\Psi(t)-z\right)^v}\frac{d\sigma(t)}{(1-t)^{2n+A-B}},\\
f_n^{(v-1)}(z) &=  \frac{(v-1)!\tau_{n,n-B}}{\tau_{n,n+A-k}}\int_{-1}^1 \frac{1}{1-t}\frac{p_{n,n+A-k}(t)p_{n,n-B}(t)}{\left(\Psi(t)-z\right)^v} d\sigma_{n}(t).
\end{align*}
where we use
$$
d\sigma_n(t) = \frac{d\mu(\Psi(t))}{(1-t)^{2n-1}} = \frac{(1-t)^{B-A}d\nu(\Psi(t))}{(1-t)^{2n-1}}
$$
Take the $(v-1)$ primitive with respect to $z$ of the previous expression
\begin{align}
f_n(z)= \frac{\tau_{n,n-B}}{\tau_{n,n+A-k}}\int_{-1}^1 \frac{1}{1-t}\frac{p_{n,n+A-k}(t)p_{n,n-B}(t)}{\Psi(t)-z} d\sigma_{n}(t). \label{IntegralIdentity01}
\end{align}
Since we know that
$$(1-t)(\Psi(t)-z)= (1+z)\left(t-\Psi^{-1}(z)\right),$$
we rewrite \eqref{IntegralIdentity01} as
\begin{align*}
& \frac{\tau^2_{n,n-B}}{1+z}\int_{-1}^1 \frac{P_{n,n+A-k}(t)P_{n,n-B}(t)}{t-\Psi^{-1}(z)} d\sigma_{n}(t),\\
= &\frac{\tau_{n,n-B}}{(1+z)\tau_{n,n+A-k}}\int_{-1}^1 \frac{p_{n,n+A-k}(t)p_{n,n-B}(t)}{t-\Psi^{-1}(z)} d\sigma_{n}(t).
\end{align*}

Then, we use Lemma \ref{LemmaIntegral} and \eqref{RatioTau} to obtain on compact subsets of $\CC\setminus \RRp$,
\begin{multline*}
\frac{\tau_{n,n-B}}{(1+z)\tau_{n,n+A-k}}\int_{-1}^1 \frac{p_{n,n+A-k}(t)p_{n,n-B}(t)}{t-\Psi^{-1}(z)}  d\sigma_{n}(t)\\
\unifn  \left(\frac{-1}{(1+z)\left(2\varphi\left(\Psi^{-1}(z)\right)\right)^{A+B-k}\sqrt{\left(\Psi^{-1}(z)\right)^2-1}}\right)=f(z).
\end{multline*}
Note that by the Cauchy--Schwarz inequality we have for $z\in \CC\setminus \RRp$
\begin{align*}
\left|f_n^{(v-1)}(z)\right|& =\left| \frac{(v-1)!\tau_{n,n-B}}{\tau_{n,n+A-k}}\int_{-1}^1 \frac{1}{1-t}\frac{p_{n,n+B-k}(t)p_{n,n-A}(t)}{\left(\psi(t)-z\right)^v} d\sigma_{n}(t)\right|\\
& \leq \frac{B}{d(K,\RRp)}.
\end{align*}

Then, for~each $v$, the~family $\left\{f_n^{(v-1)}\right\}_n$ is uniformly bounded in each $K\subset\CC\setminus \RRp$, which means by Montel's theorem (c.f. \cite[\S 5.4, Th. 15]{Ahl79}) that $\left\{f_n^{(v-1)}\right\}_{n\geq 0}$ is normal (see (\cite{Ahl79} \S 5.1 Def. 2)), i.e.,~we have that from each sequence $\mathbf{N}\subset \NN$ we can take a subsequence $\mathbf{N}_1\subset\mathbf{N}$ such that
$$f_n^{(v)}\unifn g_{(v)}; \qquad n\in \mathbf{N}_1, \quad K\subset\CC\setminus \RRp.$$
Taking the $(v-1)$ derivative and using the uniqueness of the limit  we obtain
\begin{multline*}
\frac{(v-1)!\tau_{n,n-B}}{\tau_{n,n+A-k}}\int_{-1}^1 \frac{1}{1-t}\frac{p_{n,n+A-k}(t)p_{n,n-B}(t)}{\left(\Psi(t)-z\right)^v} d\sigma_{n}(t) \\
\unifn  \left(\frac{-1}{(1+z)\left(2\Phi\left(z\right)\right)^{A+B-k}\sqrt{\left(\Psi^{-1}(z)\right)^2-1}}\right)^{(v-1)}=f^{(v-1)}(z), % \label{LimitDerivatives}
\end{multline*}
on compact subsets $K\subset \CC\setminus \RRp$, which establishes the formula.
\end{proof}

%%%%%%%%%%%%%%%%%%%%%%%%%%%%%%%%%%%%%%%%%%%%%%%%%%%%%%%%%%%%%%%%%%
\section{Relative asymptotic  within certain class of varying~measures}  \label{Sec-3}

In this section, we obtain the asymptotic relation between orthogonal polynomials with respect to different measures of the class $\left(\frac{x+1}{2}\right)^md\mu(x)$, where $\mu$ is any measure of $\med$ and $m\in \ZZ$. Note that, because~of Lemma \ref{Lemma:factor(x+1)},  the~elements of this class belong to $\med$.

To maintain a general tone in the expositions in this section we use $\mu$ and $\nu$ as two measures in $\med$ having no relation with the previous use of the~notation.

Let $m \in \mathbb{Z}^+$ and $h_{m,n}(z)$  be the $n$th orthogonal polynomial with respect to $\left(\frac{x+1}{2}\right)^md\nu(x)$, normalized as $h_{m,n}(-1)= (-1)^n$. Consider the following relations
\begin{align*}
\int_0^\infty \left(\frac{x+1}{2}\right)^k h_{0,n}(x) d\nu(x) = 0,
\end{align*}
for $k=0,\ldots,n-1$. Apply the change of variable  $\Psi(t) = z$ given in \eqref{ChangeVariables} to obtain
\begin{align*}
0 & = \int_{-1}^1 \left(\frac{1}{1-t}\right)^k h_{0,n}\left(\Psi(t)\right) d\nu\left(\Psi(t)\right)\\
  & = \int_{-1}^1 (1-t)^{n-k-1} (1-t)^nh_{0,n}\left(\Psi(t)\right) \frac{(1-t)d\nu\left(\Psi(t)\right)}{(1-t)^{2n}}.
\end{align*}

The polynomial $H_{n,n}(t) = (1-t)^n h_{0,n}(\Psi(t))$ is the $n$th monic orthogonal polynomial with respect to the varying measure modified by a polynomial term
$$(1-t)d\sigma_{n}^*(t)= \frac{(1-t)d\nu\left(\Psi(t)\right)}{(1-t)^{2n}}.$$

Following the same reasoning, we obtain that
$$
H_{n,n}^*(t) = (1-t)^n h_{1,n}(\Psi(t)),
$$
is the $n$th monic orthogonal polynomial with respect to $d\sigma_{n}^*(t)$. It is not hard to prove that the system $\{\sigma,\{(1-t)^{2n}\},0\}$ is an admissible system, see (\cite{Lago89-1} Def. p 213). Therefore, by~(\cite{Lago89-1} Th. 10), we have
\begin{equation}\label{asym:simple}
\frac{H_{n,n}(t)}{H_{n,n}^*(t)}\unifn \frac{\varphi(t)-\varphi(1)}{{2}(t-1)};\qquad K\subset \CC\setminus [-1,1].
\end{equation}

\begin{theorem}\label{asy:(x+1)}
Under the previous hypothesis we have on compact subsets of $\CC\setminus \RRp$
\begin{align}
\frac{h_{0,n}(z)}{h_{1,n}(z)} & \unifn \left(\frac{z+1}{4}\right)\left(1-\Phi(z)\right),\label{asy:simple1}\\
\frac{h_{v,n}(z)}{h_{w,n}(z)} & \unifn \left(\frac{z+1}{4}\right)^{w-v}\left(1-\Phi(z)\right)^{w-v},\label{asy:simple2}
\end{align}
where $v,w\in \ZZ$.
\end{theorem}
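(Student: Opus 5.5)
Equation \eqref{asy:simple1} follows from \eqref{asym:simple} by undoing the change of variables; \eqref{asy:simple2} then follows by iterating this with a shifted base measure.

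\emph{Step 1: unwinding \eqref{asym:simple}.} By construction, $H_{n,n}(t)=(1-t)^n h_{0,n}(\Psi(t))$ and $H^*_{n,n}(t)=(1-t)^n h_{1,n}(\Psi(t))$. Put $t=\Psi^{-1}(z)$, so that $1-t=2/(z+1)$. The factors $(1-t)^n$ cancel and
$$\frac{h_{0,n}(z)}{h_{1,n}(z)}=\frac{H_{n,n}(\Psi^{-1}(z))}{H^*_{n,n}(\Psi^{-1}(z))}.$$
As $z$ runs over a compact $K\subset\CC\setminus\RRp$, the point $\Psi^{-1}(z)$ runs over a compact subset of $\CC\setminus[-1,1]$, since $\Psi^{-1}$ maps $\CC\setminus\RRp$ conformally onto $\CC\setminus[-1,1]$. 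Hence \eqref{asym:simple} gives uniform convergence to
$$\frac{\varphi(\Psi^{-1}(z))-1}{2(\Psi^{-1}(z)-1)},$$
using $\varphi(1)=1$.

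\emph{Step 2: identifying the limit.} Since $\Psi^{-1}(z)-1=-2/(z+1)$, the limit equals $-\tfrac{z+1}{4}\bigl(\varphi(\Psi^{-1}(z))-1\bigr)$. It remains to check that $\varphi(\Psi^{-1}(z))=\Phi(z)$ with the paper's branch conventions; the proof of Lemma \ref{Th-AsympCocient} already uses this implicitly. With $w=(z-1)/(z+1)$ we have $w^2-1=-4z/(z+1)^2$, so $\sqrt{w^2-1}=\pm 2i\sqrt z/(z+1)$. Then
$$w\pm\sqrt{w^2-1}=\frac{z-1\pm2i\sqrt z}{z+1}=\frac{(\sqrt z\pm i)^2}{(\sqrt z+i)(\sqrt z-i)}.$$
The branch with $|\cdot|>1$ gives $(\sqrt z+i)/(\sqrt z-i)=\Phi(z)$ for the chosen square root. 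The limit is therefore $\tfrac{z+1}{4}(1-\Phi(z))$, which is \eqref{asy:simple1}.

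\emph{Step 3: general indices.} For $v\in\ZZ$, apply Step 1 with $\nu$ replaced by $\left(\tfrac{x+1}{2}\right)^v d\nu$. This measure lies in $\med$ by Lemma \ref{Lemma:factor(x+1)}, and the admissibility argument behind \eqref{asym:simple} only uses membership in $\med$, via Lemma \ref{Carleman-Equiv}. The polynomials $h_{v,n}$ and $h_{v+1,n}$ play the roles of $h_{0,n}$ and $h_{1,n}$, so
$$\frac{h_{v,n}}{h_{v+1,n}}\unifn \frac{z+1}{4}\bigl(1-\Phi(z)\bigr).$$
The limit does not depend on $v$, and it has no zeros in $\CC\setminus\RRp$, because $|\Phi|>1$ there and $z\neq-1$. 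For $w>v$, telescope:
$$\frac{h_{v,n}}{h_{w,n}}=\prod_{j=v}^{w-1}\frac{h_{j,n}}{h_{j+1,n}},$$
which is a product of $w-v$ uniformly convergent factors. For $w<v$, take reciprocals; this is legitimate because the limit is zero-free and the convergence is uniform on compacts. The case $w=v$ is trivial. This gives \eqref{asy:simple2}.

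The main obstacle is Step 2's branch bookkeeping. One must check that the principal branch convention ($0\le\theta<2\pi$) makes $\Phi$ coincide with $\varphi\circ\Psi^{-1}$ on all of $\CC\setminus\RRp$ and not only on a part of it. A secondary point is justifying that the admissible-system hypothesis of (\cite{Lago89-1} Th.~10) holds for every shifted measure. I would dispose of it by noting that the argument preceding the theorem uses only $\nu\in\med$.
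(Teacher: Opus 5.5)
Your proposal is correct and is essentially the paper's proof: unwind \eqref{asym:simple} via $t=\Psi^{-1}(z)$, identify $\varphi\circ\Psi^{-1}=\Phi$, then apply the result to the shifted measures $\left(\frac{x+1}{2}\right)^{v}d\nu\in\med$ and telescope. Your branch check and your reciprocal argument for $w<v$ make explicit what the paper leaves implicit.

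The only slip, which the paper shares, is your claim about the image of $\Psi^{-1}$. Since $\Psi^{-1}(-1)=\infty$, it maps $\CC\setminus\RRp$ onto $\overline{\CC}\setminus[-1,1]$, not onto $\CC\setminus[-1,1]$. For compacts containing $-1$, note that the limit equals $-\frac{i}{2}(\sqrt{z}+i)$, which is holomorphic at $-1$ with value $1$ (this also settles zero-freeness there), and that $h_{0,n}/h_{1,n}$ is holomorphic off $\RRp$. The maximum principle on a small disc about $-1$ then carries the uniform convergence across that point.
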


\begin{proof}
From \eqref{asym:simple} and taking the change of variable \eqref{ChangeVariables} we have
\begin{align*}
\frac{h_{0,n}(\Psi(t))}{h_{1,n}(\Psi(t))} & = \frac{(1-t)^n h_{0,n}(\Psi(t))}{(1-t)^n h_{1,n}(\Psi(t))}\\
& = \frac{H_{n,n}(t)}{H_{n,n}^*(t)} \unifn \frac{\varphi(t)-\varphi(1)}{{2}(t-1)} = \frac{{\Phi(z)}-1}{{\Psi^{-1}(z)}-1}.
\end{align*}

To prove $\eqref{asy:simple2}$, note that from Lemma \ref{Lemma:factor(x+1)}.
$$d\mu_k = \left(\frac{x+1}{2}\right)^kd\mu\in \med \text{ if } \mu\in \med.$$

The only hypothesis needed to obtain  \eqref{asy:simple1} is $d\nu\in \med$. Thus if we let now $d\nu = \left(\frac{x+1}{2}\right)^kd\mu = d\mu_{k}$, then $\left(\frac{x+1}{2}\right)d\nu = \left(\frac{x+1}{2}\right)^{k+1} d\mu = d\mu_{k+1}$, where $d\nu \in \med$.

Therefore, $h_{0,n} = \mathfrak{h}_{k,n}$ and $h_{1,n}= \mathfrak{h}_{k+1,n}$, where $\mathfrak{h}_{k,n}$ and $\mathfrak{h}_{k+1,n}$ are the orthogonal polynomials with respect to the measures $d\mu_k$ and $d\mu_{k+1}$, respectively, normalized by having the value $(-1)^k$ at $-1$. Therefore, we have
\begin{equation}
\frac{\mathfrak{h}_{k,n}(z)}{\mathfrak{h}_{k+1,n}(z)} \unifn -\left(\frac{z+1}{4}\right)\left(\Phi(z)-1\right).\label{asy:simple3}
\end{equation}
Without~loss of generality, we can asume $w>v$, otherwise the relation between the measures can be reverted, and~they still belong to $\med$. Stack formula \eqref{asy:simple3} as
$$
\frac{\mathfrak{h}_{v_1,n}(z)}{\mathfrak{h}_{w_1,n}} = \frac{\mathfrak{h}_{v_1,n}(z)}{\mathfrak{h}_{v_1+1,n}}\cdot\frac{\mathfrak{h}_{v_1+1,n}(z)}{\mathfrak{h}_{v_1+2,n}}\cdot \;\cdots \;\cdot \frac{\mathfrak{h}_{w_1-1,n}(z)}{\mathfrak{h}_{w_1,n}},
$$
where $v_1 = v+k$ and $w_1= w+k$. Since the measure $\mu\in \med$, \eqref{asy:simple2} holds.
\end{proof}

%%%%%%%%%%%%%%%%%%%%%%%%%%%%%%%%%%%%%%%%%%%%%%%%%

\section{{Asymptotic  for orthogonal polynomials with respect to a measure modified by a rational~factor}}\label{Ch2-Sec3}

Let $r=\alpha/\beta$, after~canceling out common factors, where
\begin{equation}\label{theFraction}
\begin{aligned}
& \alpha(z)=\prod_{i=1}^{N_1}(z-a_i)^{A_i}, \quad \beta(z)=\prod_{j=1}^{{N_2}}(z-b_j)^{B_j}, \\
& a_i \in \CC\setminus (\RRp\cup \{-1\}),\;  b_j \in \CC\setminus \RRp, \; A_i,B_j\in \NN, \\
&  A=\sum_{i=1}^{N_1} A_{i},\qquad B=\sum_{j=1}^{N_2} B_{j}.
\end{aligned}
\end{equation}

Given a measure $\nu\in \med$, denote by $d\mu(x)= \left(\frac{x+1}{2}\right)^{A-B}  d\nu(x)$ a modified measure, note that according to Lemma \ref{Lemma:factor(x+1)} it holds $\nu \in \med$.

Assume $S_n$ is the  polynomial of least degree not identically equal to zero, such that
\begin{equation}
0= \int_0^{\infty} p(x)S_n(x)r(x)\; d\nu(x), \qquad p\in \PP_{n-1}, \label{InnerProduct01}
\end{equation}
normalized such that $S_n(-1)=(-1)^n$, and~$L_n$ is the $n$th orthogonal polynomial with respect to $d\nu$, normalized such that $L_n(-1)=(-1)^n$. We are interested in the asymptotic behavior of ${S_n/L_n},n\in \ZZp$ in compact subsets of $\CC\setminus \RRp$.

\begin{theorem}\label{FundamentalLemma}
Let $\mu \in \med$ and $\alpha$ and $\beta$ defined as before. Then for all sufficiently large $n$, for~all fixed $d\in \ZZp$, in~compact subsets of $\CC\setminus \RRp$, it holds
\begin{equation}\label{FundResult01}
\frac{S_n(z)}{\ell_{0,n}(z)} \unifn \frac{(-1)^A \alpha(-1)}{4^A(z+1)^{-A}}\prod_{i=1}^{N_1}\left(\frac{\Phi(z)-\Phi(a_i)}{z-a_i}\right)^{A_i}
\prod_{j=1}^{N_2}\left(1-\frac{1}{\Phi(z)\Phi(b_j)}\right)^{B_j}.
\end{equation}
\end{theorem}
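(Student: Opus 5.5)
The plan follows the scheme of (\cite{LopMarWal95} Th.~1), transported to $\RRp$ through the varying measures of Section~\ref{Ch2-Sec2}. Write $n$ large and note that, since $\beta S_n$ has degree $n+B$, the orthogonality \eqref{InnerProduct01} says that $\beta S_n$ is orthogonal to $\PP_{n-1}$ with respect to $\alpha\,d\nu/\beta^{2}\cdot\beta$, i.e.\ $\int p\,S_n\,\alpha/\beta\,d\nu=0$. I will rewrite everything in terms of the measure $d\mu=\left(\frac{x+1}{2}\right)^{A-B}d\nu$ and its varying family $\mu_m$ of Lemma~\ref{Th-AsympCocient}, so that the polynomials $\ell_{m,n+m}$ are available. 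The key algebraic step is the expansion
\[
\alpha(z)S_n(z)=\sum_{k=0}^{A+B}\lambda_{n,k}\left(\frac{z+1}{2}\right)^{k}\ell_{A-k,\,n+A-k}(z)\cdot(\text{suitable }(z+1)\text{-powers}),
\]
or more precisely: $\alpha S_n$ (degree $n+A$) is orthogonal to $\beta\,\PP_{n-1}$ with respect to $d\nu/\beta$, and one writes $\alpha S_n$ in the basis $\{(\frac{z+1}{2})^{k}\ell_{A-k,n+A-k}\}_{k=0}^{A+B}$ modulo the orthogonality, using that $\ell_{A-k,n+A-k}$ is orthogonal to $(\frac{x+1}{2})^j$, $j<n+A-k$, against $\mu_{A-k}$. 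Dividing by $\ell_{0,n}$ and using Lemma~\ref{Th-AsympCocient}(a), each term $\left(\frac{z+1}{2}\right)^{k}\ell_{A-k,n+A-k}/\ell_{0,n}$ converges to an explicit power of $\frac{\sqrt z+i}{2}$ times $(z+1)^k$, so the whole problem reduces to the limits of the coefficients $\lambda_{n,k}$.

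To determine the $\lambda_{n,k}$ I will impose the $A+B$ interpolation-type conditions. The $B$ conditions come from the poles: orthogonality of $S_n$ against $p\beta/(x-b_j)^{s}$-type polynomials gives, after inserting the expansion, linear equations whose entries are integrals $\int (\frac{x+1}{2})^k\ell_{A-k,n+A-k}\ell_{-B,n-B}(x-b_j)^{-v}d\nu$, $v\le B_j$; the $A$ conditions come from $\alpha(a_i)^{(s)}S_n$ vanishing structure, i.e.\ $(\alpha S_n)^{(s)}(a_i)=0$ for $s<A_i$, which gives equations with entries $\ell^{(s)}_{A-k,n+A-k}(a_i)$ normalized by $\ell_{0,n}(a_i)$ and controlled by Lemma~\ref{Th-AsympCocient}(a) (with derivatives). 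Normalizing the pole equations by $\tau^2_{n,n-B}$ and applying Lemma~\ref{LemmaOfIntegrals}, all entries of the $(A+B+1)$-system (including the normalization $S_n(-1)=(-1)^n$, which fixes the scaling through $\alpha(-1)$) have explicit limits. So $\lambda_{n,k}$ converges to the solution of a limiting linear system.

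The main obstacle is showing the limiting system is nonsingular and computing its solution in closed form. In the variable $w=\Phi(z)$ the limit entries become generalized Vandermonde/confluent structures in $\Phi(a_i)$ and $1/\Phi(b_j)$ (the factors $(2\Phi)^{-(A+B-k)}$ of Lemma~\ref{LemmaOfIntegrals} and $(\sqrt z+i)^{2(\cdot)}$ of Lemma~\ref{Th-AsympCocient}), exactly as in \cite{LopMarWal95}. I would identify the limit function $F(z)=\lim \alpha S_n/\ell_{0,n}$ as $(z+1)^A$ times a polynomial of degree $A+B$ in $1/\Phi(z)$ that vanishes (with multiplicity) at $\Phi(z)=\Phi(a_i)$ and, after division by the limit integrals, at $1/\Phi(b_j)$; such a polynomial is unique up to a constant, namely $\prod(\Phi-\Phi(a_i))^{A_i}\prod(1-1/(\Phi\Phi(b_j)))^{B_j}$ times $\Phi^{-A}$, with the constant fixed by the normalization at $z=-1$ (where $\Phi=\infty$), producing the factor $(-1)^A\alpha(-1)4^{-A}$. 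Dividing by $\alpha(z)$ yields \eqref{FundResult01}; uniformity on compacts follows from uniform convergence of all ingredients, and derivatives by the normal-family argument used in Lemma~\ref{Th-AsympCocient}.
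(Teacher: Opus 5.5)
Your outline is the paper's proof in all essentials. It uses the same expansion \eqref{LinearDecompositionQS}, with $\lambda_{n,0}=(-1)^A\alpha(-1)$ forced exactly by $S_n(-1)=(-1)^n$. It uses the same $A$ conditions at the $a_i$ via Lemma~\ref{Th-AsympCocient}, and the same $B$ conditions from testing with $p=\beta\,\ell_{-B,n-B}/(x-b_j)^v$, scaling by $\tau^2_{n,n-B}$ and applying Lemma~\ref{LemmaOfIntegrals}. The limit is identified the same way, through a degree-$(A+B)$ polynomial with prescribed zeros; your $\Phi=\Phi(a_i)$, $\Phi=1/\Phi(b_j)$ are the paper's $w=\Phi(a_i)/2$, $w=1/(2\Phi(b_j))$ in the variable $w=\Phi/2$.

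The only difference is how convergence of the $\lambda_{n,k}$ is obtained. You pass to the limit in the full $(A+B+1)\times(A+B+1)$ system and invert, which needs the limit matrix $M$ to be nonsingular. The paper instead normalizes the coefficient vector ($p_n^*$, $\ell^1$-norm one), extracts convergent subsequences, and shows every limit is divisible by $\hat p$. Both rest on the same fact. If $Mc=0$ then $c_0=0$, and the chain-rule induction (nonvanishing prefactors, $\Phi'\neq0$, $(1/\Phi)'\neq0$) turns the other rows into the vanishing of $P_c(w)=\sum_k c_k w^{A+B-k}$, of degree at most $A+B-1$, at $A+B$ points counted with multiplicity. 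These points are distinct because $\Phi$ maps $\CC\setminus\RRp$ injectively into $\{|w|>1\}\cup\{\infty\}$ and $\alpha,\beta$ are coprime, so $c=0$. Your version makes nonsingularity explicit; the paper's avoids it but uses the same zero count.

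Three points need repair.

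\emph{First, the $a_i$-rows.} Their entries must be $\bigl[(\frac{z+1}{2})^k\ell_{A-k,n+A-k}/\ell_{0,n}\bigr]^{(s)}(a_i)$. That is, write the conditions as $(\alpha S_n/\ell_{0,n})^{(s)}(a_i)=0$, $s<A_i$; these are equivalent to yours for each fixed $n$, since $\ell_{0,n}(a_i)\neq0$. Get their limits by differentiating the $d=0$ case of Lemma~\ref{Th-AsympCocient}(a), as the paper does with $\ell_{-B,n-B}$ in the denominator. Read literally, your description fails. The quantity $\ell^{(s)}_{A-k,n+A-k}(a_i)/\ell_{0,n}(a_i)$ is unbounded for $s\ge1$. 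If you instead use the $d=s$ case of the lemma (normalizing row $s$ by $\ell^{(s)}_{0,n}(a_i)$), the lower Leibniz terms vanish in the limit because $\ell^{(r)}_{0,n}/\ell^{(s)}_{0,n}\unifn 0$ for $r<s$. Then all $A_i$ rows belonging to $a_i$ tend to the same row, and $M$ is singular as soon as some $A_i\ge 2$.

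\emph{Second, a dropped factor $\Phi^A$.} Since $(\sqrt z+i)^2=(z+1)\Phi(z)$, the $k$th term tends to $\lambda_k 4^{-A}(z+1)^A\Phi^A(2/\Phi)^k$. Hence $F=c\,(z+1)^A\prod_i(\Phi-\Phi(a_i))^{A_i}\prod_j\bigl(1-1/(\Phi\Phi(b_j))\bigr)^{B_j}$, not $(z+1)^A$ times a polynomial in $1/\Phi$. With your form, for $A\ge1$ you get $F(z)\to0$ as $z\to-1$, contradicting $F(-1)=\alpha(-1)$, and dividing by $\alpha$ leaves a spurious $\Phi^{-A}$. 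With the factor restored, $(z+1)\Phi\to-4$ gives exactly $c=(-1)^A\alpha(-1)4^{-A}$.

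\emph{Third, the orthogonality statement.} $\alpha S_n$ is orthogonal to $\beta\,\PP_{n-B-1}$, not $\beta\,\PP_{n-1}$, with respect to $d\nu/\beta$; equivalently, it is orthogonal to $\PP_{n-B-1}$ in $L^2(\nu)$. This is what gives exactly $A+B+1$ terms in the expansion.
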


\begin{proof}
First we focus on \eqref{InnerProduct01} for $\dsty \alpha(x)=\left(\frac{x+1}{2}\right)^k \beta(x)$ where $k=0,\dots,n-B-1$, we~have
\begin{equation*}
0=\int_{0}^{\infty} \left(\frac{x+1}{2}\right)^k S_n(x)\alpha(x) d\nu(x),
\end{equation*}
{using the change of variables \eqref{ChangeVariables} and considering the expression $d\mu(\Psi(t)) = (1-t)^{B-A} d\nu(\Psi(t))$,} the~previous integral becomes
\begin{align} \label{int_relations}
0=\int_{-1}^1 \left(1-t\right)^{n-B-k-1} (1-t)^{n+A}S_n\left(\Psi(t)\right)\; \alpha\left(\Psi(t)\right)\; \frac{d\mu\left(\Psi(t)\right)}{(1-t)^{2n-1}}.
\end{align}
for $k=0,\dots,n-B-1$. Define the $(n+A)$-degree polynomial $R_{n+A}$ as
\begin{equation*}
R_{n+A}(t):= (1-t)^{n+A}S_n\left(\Psi(t)\right)\; \alpha\left(\Psi(t)\right).
\end{equation*}

Thus, we can consider $d\sigma_n(t) = \frac{d\sigma(t)}{(1-t)^{2n-1}}$ with $d\sigma(t) = d\nu(\Psi(t))$. The~measure $d\sigma_n(t)$  defines a varying orthogonal polynomial system, satisfying Lemma \ref{Lago-1}. We denote by $P_{n,n+A-k}$ the $(n+A-k)$th monic orthogonal polynomial with respect to $d\sigma_n(t)$. According to \eqref{int_relations}, we have the following quasi-orthogonality of order $n-A$
\begin{equation}
R_{n+A}(t):= (1-t)^{n+A}S_n\left(\Psi(t)\right)\; \alpha\left(\Psi(t)\right)\;= \sum_{k=0}^{A+B} \lambda_{n,k} P_{n,n+A-k} (t). \label{linearDecomposition}
\end{equation}

Back to \eqref{linearDecomposition}, we use the connection formula \eqref{ConectionFormula02} and the change of variables \eqref{ChangeVariables} to~obtain
\begin{align}
\left(\frac{2}{z+1}\right)^{n+A} S_n(z)\alpha(z) & = \sum_{k=0}^{A+B} \lambda_{n,k} P_{n,n+A-k}\left(\Psi^{-1}(z)\right)\nonumber\\
& = \sum_{k=0}^{A+B} \lambda_{n,k} \left(\frac{2}{z+1}\right)^{n+A-k}\ell_{A-k,n+A-k}(z),\nonumber\\
S_n(z)\alpha(z)& =  \sum_{k=0}^{A+B} \lambda_{n,k} \left(\frac{z+1}{2}\right)^{k}\ell_{A-k,n+A-k}(z). \label{LinearDecompositionQS}
\end{align}
Observe  that $\lambda_{n,0}= \lambda_0 =(-1)^A\alpha(-1)$ or $S_n$ has  $\deg S_n < n$. Dividing this relation by $\ell_{-B,n-B}$ we get
\begin{equation}
\frac{S_n(z)\alpha(z)}{\ell_{-B,n-B}(z)}= \sum_{k=0}^{A+B} \lambda_{n,k} \left(\frac{z+1}{2}\right)^{k}\frac{\ell_{A-k,n+A-k}(z)}{\ell_{-B,n-B}(z)}.\label{eqOmegaN}
\end{equation}

Set $\dsty \lambda^{**}_{n,k}= \lambda_{n,k}/\lambda_{0}$, $\dsty \lambda^*_n= \left(\sum_{k=0}^{A+B}|\lambda_{n,k}^{**}|\right)^{-1}<\infty$ and introduce the polynomials
$$p_n(z)=\sum_{k=0}^{A+B}\lambda_{n,k}^{**}z^{A+B-k},\qquad p_n^*=\lambda_n^*p_n(z).$$
We will prove that
$$p_n(z)\unifn \hat{p}(z)= \prod^{N_1}_{i=1}\left(z-\frac{\Phi(a_i)}{2}\right) \prod^{N_2}_{j=1}\left(z-\frac{1}{2\Phi(b_j)}\right);\qquad K\subset \CC.$$
To this end, it   suffices to show that
\begin{equation}
p_n^*(z)\unifn c \hat{p}(z)= c\left(z^{A+B}+\lambda_1^{**}z^{A+B-1}+\cdots+\lambda_{A+B}^{**}\right),\label{GoalConvergence}
\end{equation}
where
\begin{equation}
c=\lim_{n\to \infty} \lambda_n^*= \left(\sum_{k=0}^{A+B}|\lambda_{k}|\right)^{-1}. \label{GoalConvergence01}
\end{equation}
 Now, see that $\{p_n^*\}$, for~$n\in \ZZp$ is contained in $\PP_{A+B}$ and the sum of the coefficients of $p_n^*$ for each $n\in \ZZp$, is equal to one. Therefore, this family of polynomials is normal. This means that \eqref{GoalConvergence} can be prove if we check that, for~all $\Lambda\subset \ZZp$ such that
\begin{equation}
\lim_{\substack{n\to\infty\\n\in\Lambda}} p_n^*(z) =p_\Lambda, \label{LimitLambda}
\end{equation}
$p_\Lambda(z)=c\hat{p}(z)$, where $\hat{p}(z)$ and $c$ are defined as above. Since $p_\Lambda\in \PP_{A+B}$ and $p_\Lambda \not\equiv 0$, we can uniquely determine $p_{\Lambda}$ if we find its zeros and leading coefficient. Note that the leading coefficient of $p_\Lambda$ is positive and the sum of the absolute value of its coefficients is one. Therefore, we conclude that the leading coefficient is uniquely determined by the zeros. This automatically implies that  $p_\Lambda(z)=c\hat{p}(z)$ if and only if it is divisible by $\hat{p}(z)$.

The factor $\beta$ is in \eqref{eqOmegaN} and all the zeros of $\ell_{-B,n-B}$ concentrate  on $\RRp$. Thus, we immediately obtain the following $A$ equations, for~$n\geq n_0$:
\begin{equation*}
0= \sum_{k=0}^{A+B} \lambda_n^*\lambda_{n,k}^{**} \left[\left(\frac{z+1}{2}\right)^{k}\left(\frac{\ell_{A-k,n+A-k}}{\ell_{-B,n-B}}\right)\right]^{(v)}(a_i), %\label{AEquations}
\end{equation*}
for $i=1,\dots,{N_1}$ and $v=0,\dots,A_j-1$.

From Lemma \ref{Th-AsympCocient} it follows that, for~compact subsets $K\subset \CC\setminus \RRp$, it holds
\begin{align}
\left[\left(\frac{z+1}{2}\right)^{k}\left(\frac{\ell_{n+A,n+A-k}(z)}{\ell_{-B,n-B}(z)}\right)\right]^{(v)}
\unifn \left[\left(\frac{z+1}{2}\right)^{A+B} \left(\frac{\Phi(z)}{2}\right)^{A+B-k}\right]^{(v)}. \label{AsymptoticEll01}
\end{align}
Relations \eqref{LimitLambda} and \eqref{AsymptoticEll01}, together with the fact that $\Phi$ is holomorphic with $\Phi'\neq 0$ in $\CC\setminus \RRp$, imply, using induction on $v$, that
\begin{equation}
p_{\Lambda}^{(v)}\left(\frac{\Phi(a_i)}{2}\right)=0, \qquad i=1,\dots,N_1,\quad v=0,\dots,A_i-1;\label{polynomial}
\end{equation}

$$p_\Lambda(z)= c\left(\frac{z+1}{2}\right)^{A+B}\sum_{k=0}^{A+B} \lambda_k^{**}\left(\frac{\Phi(z)}{2}\right)^{A+B-k}.$$

On the other hand, take $p(z)=\beta(z)\ell_{-B,n-B}(z)/(z-b_j)^v$ in \eqref{InnerProduct01}, $j=1,\dots, N_2$;   {$v=1,\dots, B_j$. Using \eqref{LinearDecompositionQS} and multiplying by $(v-1)!\frac{\lambda_n^*}{\lambda_0}\tau^2_{n,n-B}$ we have the additional~relations}
\begin{align}
0=& \frac{\lambda_n^*}{\lambda_0}\tau^2_{n,n-B}\int_{0}^\infty \frac{(v-1)!}{(x-b_j)^v}\;\ell_{-B,n-B}(x) S_n(x)\alpha(x) d\nu(x),\nonumber\\
=& \tau^2_{n,n-B}\int_{0}^\infty \frac{(v-1)!}{(x-b_j)^v}\; \ell_{-B,n-B}(x)\; \nonumber \\
& \sum_{k=0}^{A+B} \lambda_n^*\lambda_{n,k}^{**}\left(\frac{x+1}{2}\right)^{k}\ell_{A-k,n+A-k}(x)d\nu(x),\nonumber\\ \nonumber
0 = & \sum_{k=0}^{A+B} \lambda_n^*\lambda_{n,k}^{**} (v-1)!\tau^2_{n,n-B}\\
& \int_0^\infty \left(\frac{x+1}{2}\right)^{k}\frac{\ell_{A-k,n+A-k}(x) \ell_{-B,n-B}(x)}{(x-b_j)^v} d\nu(x),
\label{BEquations}
\end{align}
for each  $b_j$.

Relations \eqref{GoalConvergence}, \eqref{BEquations} and Lemma \ref{LemmaOfIntegrals} %\eqref{LimitDerivatives}
together with the fact that $1/\Phi$ is holomorphic with $(1/\Phi)'\neq 0$ and $1/\sqrt{\left(\psi^{-1}(z)\right)^2-1}\neq 0$ in $\CC\setminus \RRp$, give by induction
\begin{equation*}
p_{\Lambda}^{(v)}\left(\frac{1}{2\Phi(b_j)}\right)=0, \qquad j=1,\dots,N_2,\qquad v=0,\dots,B_j-1.
\end{equation*}

From the previous expression and \eqref{polynomial}  it follows that $p_{\Lambda}$ is divisible by $p_0(z)$. Therefore \eqref{GoalConvergence} and \eqref{GoalConvergence01} hold and
$$p_n(z)\unifn p_0(z), \qquad K\subset \CC.$$

From the previous expression, the~definition of $p_n$, \eqref{eqOmegaN}, \eqref{AsymptoticEll01} with $v=0$,  we obtain
\begin{equation*}
\frac{S_n(z)\alpha(z)}{\ell_{-B,n-B}(z)} \unifn (-1)^A \alpha(-1)\left(\frac{z+1}{2}\right)^{A+B} \hat{p}\left(\frac{\Phi(z)}{2}\right).
\end{equation*}
{Use the asymptotic formula \eqref{AsimpRatioOP} in the previous expression and group conveniently to~obtain}
\begin{multline*}
\frac{S_n(z)}{\ell_{-B,n-B}(z)}\cdot \frac{\ell_{-B,n-B}(z)}{\ell_{0,n}(z)} \unifn
\left(\frac{z+1}{2}\right)^{A}\frac{(-1)^A \alpha(-1)\Phi(z)^{-B}}{\alpha(z)}\\
\prod_{i=1}^{N_1}\left(\frac{\Phi(z)-\Phi(a_i)}{2}\right)^{A_i} \prod_{i=1}^{N_2}\left(\frac{\Phi(z)}{2}-\frac{1}{2\Phi(b_j)}\right)^{B_j}
\end{multline*}
and \eqref{FundResult01} follows for $v=0$. To~prove the formula for $d\in \ZZp$, apply the same technique of the proof of Lemma \ref{Th-AsympCocient}.
\end{proof}

\begin{remark}\
\begin{enumerate}
\item The proof depends on the assumption of $\alpha(-1)\neq 0$, we will remove this restriction in Section~\ref{sec:rationalAsym}.

\item We suppose that $\alpha,\beta$ are monic. We can remove that restriction without loss of generality due to the fact that orthogonal polynomial systems are invariant under the constant modification of~measures.

\end{enumerate}
\end{remark}

Theorem \ref{FundamentalLemma} gives the ratio asymptotic between the orthogonal polynomials with respect to a rational modification of kind $r(x)d\nu(x)$ (a general rational modification with no zeros at $-1$) denoted as $S_n$ and those orthogonal with respect to a modified measure of type $\dsty \left(\frac{x+1}{2}\right)^{A-B}$, denoted as $\ell_{0,n}$.

To obtain the general formula we must find the following limit
$$
\lim_{n\to \infty} \frac{\ell_{0,n}(z)}{L_{n}(z)},
$$
on compact subsets of $\CC\setminus \RRp$, where $L_n(z)$ is the $n$th orthogonal polynomial with respect to $d\nu\in \med$ normalized such that $L_n(-1) = (-1)^n$.

\section{Proof of Theorem \ref{main:result:CH}} \label{sec:rationalAsym}

Now, we obtain an analogous of \eqref{AsymBoundedCase} for measures with support on $\RRp$. Define $\hat{\alpha}$ as
$$\hat{\alpha}(z) = \left(\frac{z+1}{2}\right)^{C}\alpha(z)$$
wherein $\alpha$ is defined in \eqref{theFraction} and $C\in \ZZ_+$ is the multiplicity of the zero $-1$ in $\hat{\alpha}/\beta$. Without~loss of generality we can assume that there are more zeros than poles on $-1$, if~not $C=0$. Also, let $L_n$ be the $n$th orthogonal polynomial with respect to $d\hat{\nu}\in \med$, normalized by the condition $L_n(-1)= (-1)^n$. Denote by $Q_n$ the $n$th orthogonal polynomial with respect to $\hat{r} d \hat{\nu}$, where $r=\hat{\alpha}/\beta$, normalized as usual, $Q_n(-1)=(-1)^n$.

Note that if $C=0$, $\hat{r} = r$ and $Q_n = S_n$, as~defined in Section~\ref{Ch2-Sec3}. Under~this notation, \eqref{final:formula}~is written~as

\begin{equation*}\label{final:formula-2}
\frac{Q_n^{(d)}(z)}{L_n^{(d)}(z)} \unifn \left(\frac{2i}{\sqrt{z}+i}\right)^C\prod_{i=1}^{N_1}\left(\frac{\sqrt{a_i}+i}{\sqrt{z}+\sqrt{a_i}}\right)^{A_i}\prod_{j=1}^{N_2} \left(\frac{\sqrt{z}+\sqrt{b_j}}{\sqrt{b_j}+i}\right)^{B_j},
\end{equation*}
in compact subsets  of $\CC\setminus \RR$, for~$d\in \ZZp$.

\begin{proof}[Proof of Theorem \ref{main:result:CH}]\

 Let us first observe that $Q_n$ is orthogonal with respect to $\left(\frac{x+1}{2}\right)^{C}\frac{\alpha}{\beta}d\hat{\nu}$. Then if we set
\begin{equation}\label{measure:relation}
d\hat{\nu} = \left(\frac{x+1}{2}\right)^{-C}d\nu,
\end{equation}
we obtain that $Q_n$ is orthogonal with respect to $\frac{\alpha}{\beta}d\nu$, and~satisfies the hypotheses of \mbox{Theorem~\ref{FundamentalLemma}}, thus we have on compact subsets of $\CC\setminus \RRp$
$$
\frac{Q_n(z)}{\ell_{0,n}(z)}\unifn \mathfrak{F}(z),
$$
where $\mathfrak{F}(z)$ is given in \eqref{FundResult01}.

On the other hand, $\ell_{0,n}$ is orthogonal with respect to $\left(\frac{x+1}{2}\right)^{A-B}d\nu$. This means by \eqref{measure:relation} that $\ell_{0,n}$ is orthogonal with respect to $\left(\frac{x+1}{2}\right)^{A-B+C}d\hat{\nu}$.
Thus, taking into account \mbox{Theorem \ref{asy:(x+1)}}, we have
$$
\frac{\ell_{0,n}(z)}{L_n(z)}\unifn \left(\frac{z+1}{4}\right)^{B-A-C}\left(1-\Phi(z)\right)^{B-A-C}.
$$
Multiply the expressions corresponding to
\begin{align}\label{consider}
\left(\frac{z+1}{4}\right)^{B-A-C}\left(1-\Phi(z)\right)^{B-A-C}\cdot  \mathfrak{F}(z),
\end{align}

Let us break down this expression into the following~terms

\begin{align*}
\mathfrak{F}(z) & = \frac{(-1)^A \alpha(-1)}{4^A(z+1)^{-A}}\prod_{i=1}^{N_1}\left(\frac{\Phi(z)-\Phi(a_i)}{z-a_i}\right)^{A_i}
\prod_{i=1}^{N_2}\left(1-\frac{1}{\Phi(z)\Phi(b_j)}\right)^{B_j}.\\
(-1)^A \alpha(-1) & = \prod_{i=1}^{N_1}(1+a_i)^{A_i}\\
(1-\phi(z)) & = -\frac{2i}{\sqrt{z}-i}\\
\frac{\Phi(z)-\Phi(a_i)}{z-a_i} & = \frac{-2i}{(\sqrt{z}-i)(\sqrt{a_i}-i)(\sqrt{a_i}+\sqrt{z})}\\
1-\frac{1}{\Phi(z)\Phi(b_j)} & = \frac{2i\left(\sqrt{b_j}+\sqrt{z}\right)}{(\sqrt{z}+i)\left(\sqrt{b_j}+i\right)}.
\end{align*}

On the other hand
\begin{align*}
\prod_{i=1}^{N_1}\left(\frac{\Phi(z)-\Phi(a_i)}{z-a_i}\right)^{A_i} & = \left(\frac{-2i}{\sqrt{z}-i}\right)^A \prod_{i=1}^{N_1} \left(\frac{1}{(\sqrt{a_i}-i)(\sqrt{a_i}+\sqrt{z})}\right)^{A_i}\\
\prod_{j=1}^{N_2}\left(1-\frac{1}{\Phi(z)\Phi(b_j)}\right)^{B_j} & = \left(\frac{2i}{\sqrt{z}+i}\right)^B\prod_{j=1}^{N_2}\left( \frac{\sqrt{b_j}+\sqrt{z}}{\sqrt{b_j}+i}\right)^{B_j}
\end{align*}

Combining these terms in \eqref{consider} we obtain
\begin{align*}
& \left(\frac{z+1}{4}\right)^{B-A-C}\left(1-\Phi(z)\right)^{B-A-C}\cdot  \mathfrak{F}(z)\\
= & \frac{1}{4^A}\prod_{i=1}^{N_1}(1+a_i)^{A_i}\left(\frac{-2i}{\sqrt{z}-i}\right)^{B-C-A}\left(\frac{z+1}{4}\right)^{B-C} \left(\frac{-2i}{\sqrt{z}-i}\right)^A \left(\frac{2i}{\sqrt{z}+i}\right)^B\\
& \cdot \prod_{i=1}^{N_1} \left(\frac{1}{(\sqrt{a_i}-i)(\sqrt{a_i}+\sqrt{z})}\right)^{A_i} \prod_{j=1}^{N_2}\left( \frac{\sqrt{b_j}+\sqrt{z}}{\sqrt{b_j}+i}\right)^{B_j}.
\end{align*}

Finally, taking into account
\begin{align*}
 \prod_{i=1}^{N_1} \left(\frac{\sqrt{a_i}+i}{\sqrt{a_i}+\sqrt{z}}\right)^{A_i} & = \prod_{i=1}^{N_1}(1+a_i)^{A_i}\cdot \prod_{i=1}^{N_1} \left(\frac{1}{(\sqrt{a_i}-i)(\sqrt{a_i}+\sqrt{z})}\right)^{A_i}\\
\left(\frac{2i}{\sqrt{z}+i}\right)^{C} & =  \frac{1}{4^A}\left(\frac{-2i}{\sqrt{z}-i}\right)^{B-C-A}\left(\frac{z+1}{4}\right)^{B-C} \left(\frac{-2i}{\sqrt{z}-i}\right)^A \left(\frac{2i}{\sqrt{z}+i}\right)^B
\end{align*}
we obtain \eqref{final:formula} for $d=0$.  To~prove \eqref{final:formula} for $d\geq 1$, use induction in $d$ and the method from the proof of Lemma \ref{Th-AsympCocient}. The~proof is complete.
\end{proof}

\section*{Author's Note}
This paper is an updated version of the original article incorporating several minor corrections: C. Fel\'{\i}z-S\'{a}nchez, H. Pijeira-Cabrera, and J. Quintero-Roba, \emph{Asymptotics for Orthogonal Polynomials with Respect to a Rational Modification of a Measure Supported on the Semi-Axis}, Mathematics \textbf{2024}, \emph{12}(7), 1082.

%%%%%%%%%%%%%%%%%%%%%%%%%%%%%%%%%%%%%%%%%%%%%%%%%%%%%%%%%%%%%%%%%%%%%%%%%%%%%%%%%%%
 
%%%%%%%%%%%%%%%%%%%%%%%%%%%%%%%%%%%%%%%%%%%%%%%%%%%%%%%%%%%%%%%%%%%%%%%%%%%%%%%%%

\end{document}